\definecolor{verde}{rgb}{0.2, 0.7, 0.3}
\newcommand{\ufdPG}{^{\mathrm{fdPG}}}
\newcommand{\uP}{^{\mathrm{P}}}
\newcommand{\uPG}{^{\mathrm{PG}}}
\newcommand{\uPGp}{^{\mathrm{PG,(p)}}}
\newcommand{\uPGpp}{^{\mathrm{PG,(p')}}}
\newcommand{\uPq}{^{\mathrm{P,(q)}}}
\newcommand{\uun}{^{\mathrm{unprec}}}
\newcommand{\matB}{{\mathbf{B}}}
\newcommand{\vecu}{{\mathbf{u}}}
\newcommand{\vecg}{{\mathbf{g}}}
\newtheorem{theorem}{Theorem}[section]
\newtheorem{lemma}[theorem]{Lemma}
\newtheorem{example}[theorem]{Example}
\newtheorem{remark}[theorem]{Remark}
\begin{document}
\title{Low-rank approximation of linear parabolic equations by space-time tensor Galerkin methods}
\author{Thomas Boiveau\thanks{Universit\'e Paris-Est, CERMICS (ENPC), 77455 Marne-la-Vall\'ee 2, France and Inria Paris, 75589 Paris, France. Part of this research was carried out when the first and third authors participated in the Hausdorff Trimester Program  ``Multiscale Problems: Algorithms, Numerical Analysis and Computation''}
\and Virginie Ehrlacher\footnotemark[1]
\and Alexandre Ern\footnotemark[1]
\and Anthony Nouy\thanks{GeM - UMR CNRS 6183, \'Ecole Centrale de Nantes, Universit\'e de Nantes, 1 rue de la No\"e, 44321 Nantes, France.}}
\date{}
\maketitle
\begin{abstract}
We devise a space-time tensor method for the low-rank approximation of linear parabolic evolution equations. The proposed method is a stable Galerkin method, uniformly in the discretization parameters, based on a Minimal Residual formulation of the evolution problem in Hilbert--Bochner spaces. The discrete solution is sought in a linear trial space composed of tensors of discrete functions in space and in time and is characterized as the unique minimizer of a discrete functional where the dual norm of the residual is evaluated in a space semi-discrete test space. The resulting global space-time linear system is solved iteratively by a greedy algorithm. Numerical results are presented to illustrate the performance of the proposed method on test cases including non-selfadjoint and time-dependent differential operators in space. The results are also compared to those obtained using a fully discrete Petrov--Galerkin setting to evaluate the dual residual norm.
\end{abstract}
%
%
\medskip\noindent\textbf{Keywords.}
Parabolic equations, Tensor methods, Proper Generalized Decomposition, Greedy algorithm.

\medskip\noindent\textbf{AMS.}
{65M12, 65M22, 35K20}
%
\section{Introduction}

The goal of this work is to devise a space-time tensor method for the low-rank approximation of the solution to linear parabolic evolution equations. The method we propose has two salient features. First, it is a stable Galerkin method, uniformly with respect to the discretization parameters in space and in time, leading to quasi-optimal error estimates in the natural norms of the problem as specified below. Second, the method is global in time (and in space), and the approximate solution is iteratively constructed by solving alternatively global problems in space and in time. More precisely, at iteration $m\in\mathbb{N}^*$, the space-time approximate solution $u^m(x,t)$ is of the form
\begin{equation} \label{eq:greedy1}
u^m(x,t) = \sum_{1\le n\le m} v^n(x)s^n(t),
\end{equation} 
where $v^n$ and $s^n$ are members of some finite-dimensional spaces $V_h$ and $S_k$ composed of space and time functions having dimension $N_h$ and $N_k$, respectively. We say that $u^m$ is an approximation of rank $m$ of the exact solution, consisting of a summation of rank-one terms. We employ a greedy rank-one algorithm for computing the sequence of approximations. Specifically, the approximate solution $u^m$ is constructed iteratively, i.e., once $u^{m-1}$ for $m\ge1$ is known (we set conventionally $u^0=0$), the functions $v^m$ and $s^m$ are computed by solving successively a global problem in space and in time having size $N_h$ and $N_k$, respectively, and which is defined by minimizing some quadratic convex functional. The present method has the potential to be computationally effective if the exact solution can be approximated accurately by low-rank space-time tensors. In this case, space-time compression is meaningful and full parallelism in time can be exploited by the global time solves leading to an overall computational cost of the order of $m(N_h+N_k)$, whereas traditional time-stepping methods are expected to exhibit a cost of the order of $N_h\times N_k$. Thus, computational benefits are expected whenever $m\ll \min(N_h,N_k)$.
We notice that in the literature, approximate solutions of the form~\eqref{eq:greedy1} are typically obtained  by a Proper Generalized Decomposition (PGD)~\cite{Le-Bris_2009_a, Nouy_2009_a, Falco_2011_a, Chinesta_2014_a}, which is a greedy algorithm~\cite{Temlyakov_2008_a}. 
In the context of parabolic problems, the PGD strategy has been first introduced within the LATIN method in~\cite{Ladeveze_2012_a}. Theoretical convergence results have been obtained in different contexts, see, e.g., \cite{Chinesta_2014_a, Le-Bris_2009_a, Cances_2011_a,Falco_2012_a, cances2014greedy}.
We leave the question of adaptivity to future work, i.e., the discrete spaces $V_h$ and $S_k$ are fixed a priori in what follows. Possible applications of the present method can be envisaged in optimal control problems constrained by parabolic evolution equations (see, e.g., \cite{GunKu:11}) and in parabolic evolution equations with random input data (see, e.g., \cite{HoaSc:13}); in both cases indeed, global space-time approaches are important. We also mention the dynamical low-rank integrators from, e.g., \cite{KocLu:07,LubOs:14,KiLuW:16} which can be used for parabolic evolution equations with the rank referring to the space variables.

Our starting point is the well-posed formulation of the parabolic evolution equation at hand in the setting of space-time Hilbert--Bochner spaces. More precisely, following Lions and Magenes \cite[p.~234]{LioMa:72}, the trial space is $X = L^2(I;V)\cap H^1(I; V')$ and the test space is $Z=L^2(I;V)\times L$, where $I$ is the (non-empty, bounded) time interval and the separable real Hilbert spaces $(V,L,V')$ form a Gelfand triple, i.e., $V\hookrightarrow L\equiv L'\hookrightarrow V'$ with densely defined embeddings. The prototypical example is the heat equation for which $V=H^1_0(\Omega)$, $L=L^2(\Omega)$, $V'=H^{-1}(\Omega)$, where $\Omega$ is a bounded, Lipschitz, open subset of $\mathbb{R}^d$, $d\ge1$. More generally, we consider time-dependent linear (possibly non-selfadjoint) operators $A(t) : V \rightarrow V'$ that are bounded and coercive, pointwise in time (a.e.). One important assumption for the present method to remain computationally effective is that the space-time operator and source term in the parabolic evolution equation admit a separated representation in space and in time with a relatively low rank, see Eq.~\eqref{eq:separate} below. This assumption is met in practice for a wide range of problems coming from the engineering and applied sciences. In several situations, it is also possible to devise such low-rank separated representations with good accuracy using the Empirical Interpolation Method (EIM) \cite{BaMNP:04}. The above well-posed space-time formulation allows one to view the parabolic evolution problem as a Minimal Residual (MinRes) formulation, where the exact solution is the unique minimizer over the trial space $X$ of the (square of the) dual norm of the residual in $Z'$. The present approximation method is formulated as a Galerkin method for the MinRes formulation. More precisely, we look for the minimizer over a finite-dimensional subspace $X_{hk}\subset X$ of the (square of the) dual norm of the residual measured with respect to a space semi-discrete subspace $Z_h\subset Z$. Here, $X_{hk}$ is a linear space generated using space-time tensor-products of elements of the finite-dimensional subspaces $V_h\subset V$ and $S_k\subset H^1(I)$ considered above, whereas $Z_h = (V_h\otimes L^2(I))\times V_h$, i.e., the time variable is not discretized in $Z_h$. 

Let us put our work in perspective with the literature. The subject of numerical methods to approximate parabolic evolution equations is extremely rich. One important class of methods are the traditional time-stepping schemes which approximate the solution on a succession of time sub-intervals by marching along the positive time direction, see, e.g., \cite{Thomee:06} for an overview. In the context of parabolic evolution equations, implicit schemes are often preferred to circumvent the classical CFL restriction on explicit schemes, which is of the form $\delta_k\lesssim \delta_h^2$ (where $\delta_k\sim N_k^{-1}$ is the time-step and $\delta_h\sim N_h^{-1/d}$ is the space-step), but at the expense of having to solve a large linear system of equations at each time-step. The cost of having to solve these systems sequentially has motivated the devising of parareal methods
\cite{LiMaT:01,GanVa:07} based on iterative corrections at all time sub-intervals simultaneously using the global space-time discrete system associated with time-stepping methods. This global space-time discrete system has also been central in the devising of space-time domain decomposition methods based on  waveform relaxation \cite{JanVa:96,GanZh:02,GilKe:02}.
In contrast to the above approaches which do not make a direct use of the well-posed functional setting in space-time Hilbert--Bochner spaces, a space-time adaptive wavelet method for parabolic evolution problems was proposed and analyzed in \cite{Schwab_2009_a}, involving a rather elaborate construction of the wavelet bases. Simpler hierarchical wavelet-type tensor bases on a space-time sparse grid were also considered in \cite{GriOe:07} within a heuristic space-time adaptive algorithm, but without offering guaranteed a-priori stability, uniformly with respect to the discretization parameters. We also mention the recent work \cite{NeuSme:18} where the above functional setting for parabolic evolution problems is used to devise preconditioned time-parallel iterative solvers. Furthermore, PGD approximations based on a discrete MinRes formulation measured in the space-time Euclidean norm of the components in a basis of $X_{hk}$ have been devised and evaluated numerically in \cite{Nouy_2010_a}, obtaining promising results on various model parabolic evolution problems.

More recently, space-time Petrov--Galerkin discretizations of parabolic evolution equations were proposed and analyzed in the PhD Thesis \cite{Andreev:12} and in the related papers \cite{Andreev_13,Andreev_2014_a}. Therein, the same MinRes formulation is considered as in the present work at the continuous level, and the approximate solution is typically sought in the same space-time discrete space. There are, however, several salient differences between \cite{Andreev_13,Andreev_2014_a} and the present work. First, in \cite{Andreev_13,Andreev_2014_a}, the dual residual norm of the discrete solution is measured with respect to a \emph{fully discrete} space-time test space, leading to a Petrov--Galerkin formulation, whereas we consider a \emph{space semi-discrete} test space, leading to a standard Galerkin formulation. The difference is that a careful design of the test space is necessary in the Petrov--Galerkin setting. Precise results in this direction have been obtained in \cite{Andreev_13}. Let us for instance notice that, for a constant-in-time and selfadjoint differential operator in space (e.g., for the heat equation), the Crank--Nicolson scheme (obtained using continuous piecewise affine time-functions in the trial space and piecewise constant time-functions in the test space) is only conditionally stable, with a stability constant degenerating with the parabolic CFL condition, whereas unconditional stability is achieved by further refining the time mesh used to build the discrete test space as shown in \cite[Sec.~5.2.3]{Andreev:12}. In contrast with this, the present formulation automatically inherits uniform stability with respect to the time-step size. In particular, Lemma~\ref{lem:Bh_properties} below shows well-posedness and Lemma~\ref{lem:err_est_Gal} a quasi-optimal error bound with a constant independent on the time discretization. Nonetheless, the condition number of the discrete matrices should behave as $O(N_k^{-2})$, so that, as usual, the precision can be limited in practice by round-off errors.  
The second difference lies in the way the discrete system of linear equations is solved iteratively: we use a greedy algorithm to build a sequence of approximate solutions having the form~\eqref{eq:greedy1}, whereas (a generalization of) the LSQR algorithm \cite{PaiSa:82} is used in \cite{Andreev_2014_a}. Third, we report numerical results on a larger set of model parabolic evolution problems, including in particular non-selfadjoint operators of advection-diffusion-type at moderate P\'eclet numbers. Finally, let us mention, as already observed in \cite{Andreev:12,Andreev_13,Andreev_2014_a}, that the inner product with which we equip the discrete test space $Z_h$ plays the role of a preconditioner in the discrete system of linear equations resulting from the discrete MinRes formulation. Equipping $Z_h$ with the natural norm leads to the appearance of the inverse of the stiffness matrix in space. Herein, we explore numerically the effect of relaxing the use of this preconditioner by simply equipping the space-part of $Z_h$ with the Euclidean norm of the components in a basis of $V_h$. This corresponds to the approach considered in \cite{Nouy_2010_a}.

In Section \ref{sec:continuous_setting}, we specify the functional setting for parabolic evolution equations and the MinRes formulation. 
This formulation is the basis for the discrete MinRes Galerkin formulation devised in Section~\ref{sec:discrete}, where one key idea is the use of a space semi-discrete test space to measure the dual norm of the residual.
In Section \ref{sec:low_rank_approximation}, we present the greedy algorithm we consider to obtain a low-rank approximation of the discrete solution. 
In Section \ref{sec:other}, we present two other discrete MinRes formulations, one using a fully discrete Petrov--Galerkin setting as in \cite{Andreev:12,Andreev_13,Andreev_2014_a} and one using the same space semi-discrete setting as in Section~\ref{sec:discrete} but equipping the test space with the above-mentioned Euclidean norm. These additional formulations are introduced for the purpose of performing numerical comparisons. 
Numerical results on various test cases are discussed in Section \ref{sec:numerical_results}.
Finally, conclusions are drawn in Section~\ref{sec:conclusion}.

\section{Minimal Residual formulation of parabolic evolution equations}\label{sec:continuous_setting}

In this section, we present the MinRes formulation of parabolic equations, which is at the heart of the tensor approximation method we propose later on.

\subsection{Parabolic equations}

The functional setting for parabolic equations is well understood
(see, e.g., the textbooks by Lions and Magenes \cite[p.~234]{LioMa:72}, 
Dautray and Lions \cite[p.~513]{DauLi:92}, and Wloka \cite[p.~376]{Wloka:87}).
Let 
\begin{equation} \label{eq:Gelfand}
V\hookrightarrow L\equiv L' \hookrightarrow V'
\end{equation}
be a Gelfand triple where $V$ and $L$ are separable real Hilbert spaces respectively equipped with inner products $\langle\cdot,\cdot \rangle_{V}$ and $\langle\cdot,\cdot \rangle_{L}$, 
with associated norms $\|\cdot\|_V$ and $\|\cdot\|_{L}$. The symbol $\hookrightarrow$ represents a densely defined and continuous embedding. Let $T>0$ be the time horizon and let $I:=(0,T)$ 
be the time interval. Let $A : I \rightarrow \mathcal{L}(V,V')$ be a strongly measurable time-function with values in the Hilbert space of bounded linear operators from $V$ to $V'$. We assume 
that the following boundedness and coercivity properties hold true: 
there exist $0<\alpha \le M<+\infty$ such that a.e.~$t\in I$,
\begin{subequations} \label{eq:alpha_M} \begin{alignat}{2} \label{eq:M}
&\|A(t)v\|_{V'}\leq M\|v\|_{V},&\quad\quad &\forall v\in V, \\
\label{eq:alpha}
&\langle A(t)v,v \rangle_{V',V}\geq \alpha \| v \|_{V}^2,&\quad\quad &\forall v\in V.
\end{alignat}\end{subequations}
We do not require $A(t)$ to be selfadjoint. 

Let us define the Hilbert--Bochner spaces
\begin{equation}
X:=L^2(I;V)\cap H^1(I; V'),\quad\quad Y:=L^2(I;V),\quad\quad Z:=Y\times L.
\end{equation}
Since $X \hookrightarrow \mathcal{C}^0(\overline{I}; L)$ with $\overline{I}=[0,T]$, the value at any time $t\in \overline{I}$ of any function $x\in X$ is well-defined as an element of $L$. In particular, we denote $x(0)\in L$ the initial value of $x$ at the time $t=0$.
The spaces $X$ and $Z$ are equipped with the norms
\begin{subequations}\begin{align}
\label{norm_X_definition}
\| x \|_{X}^2&:=\| x\|_{L^2(I;V)}^2+M^{-2}\| \partial_t x \|_{L^2(I;V')}^2+\alpha^{-1}\|x(T)\|_L^2,\qquad\forall x \in X,\\
\|z\|_{Z}^2&:= \|y\|_{L^2(I;V)}^2+\alpha^{-1}\|g\|_{L}^2,\qquad\forall z=(y,g) \in Y\times L = Z, 
\end{align}\end{subequations}
where the various scaling factors are introduced to be dimensionally consistent.

Let $f\in Y'=L^2(I;V')$ and let $u_0\in L$. We consider the following parabolic problem: find $u\in X$ such that 
\begin{equation}
\label{evolution_problem}
\left\{
\begin{alignedat}{2}
&\partial_t u(t)+A(t)u(t)=f(t),&\qquad&\text{in $V'$, a.e.~$t\in I$},\\
&u(0) = u_0,&\qquad&\text{in $L$}.
\end{alignedat}\right.
\end{equation}
For the present space-time tensor methods to be computationally effective, we assume that the operator $A$ and the source term $f$ have the following separated form:
\begin{equation}\label{eq:separate}
A(t) = \sum_{1\le p\le P} \mu^{(p)}(t) A^{(p)}\in \mathcal{L}(V,V'), \qquad f(t)  = \sum_{1\le q\le Q} \lambda^{(q)}(t)f^{(q)} \in V', 
\end{equation}
for some positive integers $P,Q$ taking moderate values, where $\mu^{(p)} \in L^{\infty}(I)$, $A^{(p)} \in \mathcal{L}(V,V')$ for all $1\leq p \leq P$, and $\lambda^{(q)}\in L^2(I)$, $f^{(q)}\in V'$ for all $1\leq q \leq Q$. A similar decomposition is considered, e.g., in \cite{MaScTo:18} for the space-time isogeometric discretization of parabolic problems with varying coefficients.

\begin{example}[Heat equation]
Let $\Omega$ be a Lipschitz domain in $\mathbb{R}^d$, $V = H^1_0(\Omega)$, $L = L^2(\Omega)$, and $V' = H^{-1}(\Omega)$. Let $\mu: I\rightarrow\mathbb{R}$ be a measurable function bounded from above and from below away from zero uniformly on $I$. Then, the time-dependent family of operators $(A(t))_{t\in I}$ defined such that $A(t):=-\mu(t)\Delta$, for a.e.~$t\in I$, where $\Delta \in \mathcal L(V;V')$ is the Laplacian operator, satisfies the assumptions~\eqref{eq:alpha_M} and~\eqref{eq:separate}. Whenever $\mu(t)\equiv1$, the family of operators is time-independent, and one recovers the prototypical example of the heat equation.
\end{example}

\subsection{Well-posedness and minimal residual formulation}

It is convenient to introduce the operator $\mathcal{A} : X \rightarrow Y'$ so that 
\begin{equation}
(\mathcal{A}x)(t) = \partial_tx(t)+A(t)x(t) \in V', \quad\quad \text{a.e. $t\in I$}.
\end{equation}
Problem~\eqref{evolution_problem} can be equivalently rewritten using the operator $\mathcal{B}:X\rightarrow Z' = Y' \times L$ such that
\begin{equation}
\mathcal{B}x = (\mathcal{A}x, x(0)) \in Z', \qquad \forall x\in X. 
\end{equation}
Then, an equivalent reformulation of problem \eqref{evolution_problem} reads as follows: find $u\in X$ such that
\begin{equation}
\label{continuous_system}
\mathcal{B}u= (f, u_0) \quad\text{in $Z'$}.
\end{equation}
It is well-known that the operator $\mathcal{B}$ is bounded, i.e., $\mathcal{B}\in \mathcal{L}(X,Z')$, and satisfies 
the following two properties:
\begin{subequations}\begin{align}
&\exists \beta>0 \quad \text{s.t.}\quad \inf_{x\in X}\sup_{z\in Z} \frac{\langle \mathcal{B}x,z\rangle_{Z',Z}}{\|x\|_X\|z\|_Z} \ge \beta, \label{eq:infsup}\\
&\forall z\in Z, \quad (\langle \mathcal{B}x,z\rangle_{Z',Z}=0, \; \forall x\in X) \; \Longrightarrow \; (z=0),
\end{align}\end{subequations} 
where it is implicitly understood that nonzero arguments are considered in the inf-sup condition, and where, for all $(y,g)\in Y\times L=Z$,
\begin{align}
\langle \mathcal{B}x,(y,g)\rangle_{Z',Z}&= \langle\mathcal{A}x,y\rangle_{Y',Y} + \langle x(0),g\rangle_L \nonumber \\
&= \int_I \langle \partial_tx(t)+A(t)x(t),y(t)\rangle_{V',V}dt + \langle x(0),g\rangle_L. \label{eq:def_calB_op}
\end{align}
Therefore, owing to the Banach--Ne\v{c}as--Babu\v{s}ka Theorem (see, e.g., \cite[Thm.~2.6]{Ern_2004_a}), $\mathcal{B}$ is an isomorphism. 
The proof of the well-posedness of parabolic problems by means of inf-sup arguments can be found in \cite[Thm.~6.6]{Ern_2004_a} using a strongly enforced initial condition; a systematic treatment can be found more recently in \cite{Tantardini_2016_a}. Since the operator norm of $\mathcal{B}$ and the inf-sup constant $\beta$ in \eqref{eq:infsup} play an important role in what follows, we provide respectively an upper bound and a lower bound for these two constants. For a.e.~$t\in I$, the inverse adjoint operator $A(t)^{-\mathrm{T}}$ is well-defined in $\mathcal{L}(V';V)$, and we have $M^{-1}\|\varphi\|_{V'}\le \|A(t)^{-\mathrm{T}}\varphi\|_V \le \alpha^{-1}\|\varphi\|_{V'}$ and $\langle \varphi,A(t)^{-\mathrm{T}}\varphi\rangle_{V',V} \ge \frac{\alpha}{M^2}\|\varphi\|_{V'}^2$, for all $\varphi\in V'$.
\begin{lemma}[Boundedness] \label{lem:continuity}
The norm of the operator $\mathcal{B}: X \to Z'$  is such that  $$\Vert \mathcal{B} \Vert_{\mathcal{L}(X;Z')} := \sup_{x\in X}\sup_{z\in Z} \frac{\langle \mathcal{B}x,z\rangle_{Z',Z}}{\|x\|_X\|z\|_Z}  \le \sqrt{3} M.$$
\end{lemma}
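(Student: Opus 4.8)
The plan is to fix $x\in X$ and $z=(y,g)\in Y\times L=Z$ and to bound the right-hand side of the identity~\eqref{eq:def_calB_op} by splitting it into three contributions, $T_1:=\int_I\langle\partial_t x(t),y(t)\rangle_{V',V}\,dt$, $T_2:=\int_I\langle A(t)x(t),y(t)\rangle_{V',V}\,dt$, and $T_3:=\langle x(0),g\rangle_L$. For bookkeeping of the constants I would set $a:=\|x\|_{L^2(I;V)}$, $b:=M^{-1}\|\partial_t x\|_{L^2(I;V')}$, and $c:=\alpha^{-1/2}\|x(T)\|_L$, so that $\|x\|_X^2=a^2+b^2+c^2$ by~\eqref{norm_X_definition}, and recall $\|z\|_Z^2=\|y\|_{L^2(I;V)}^2+\alpha^{-1}\|g\|_L^2$.

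The terms $T_1$ and $T_2$ are handled directly. Cauchy--Schwarz in time gives $T_1\le\|\partial_t x\|_{L^2(I;V')}\,\|y\|_{L^2(I;V)}=Mb\,\|y\|_{L^2(I;V)}$, while using the bound on the duality pairing, then the boundedness~\eqref{eq:M} of $A(t)$ pointwise in time, then Cauchy--Schwarz in time, one gets $T_2\le M\,\|x\|_{L^2(I;V)}\,\|y\|_{L^2(I;V)}=Ma\,\|y\|_{L^2(I;V)}$. Hence $T_1+T_2\le M(a+b)\,\|y\|_{L^2(I;V)}\le\sqrt2\,M\sqrt{a^2+b^2}\,\|y\|_{L^2(I;V)}\le\sqrt2\,M\,\|x\|_X\,\|y\|_{L^2(I;V)}$.

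The term $T_3$ is the crux, since the norm~\eqref{norm_X_definition} only controls the value of $x$ at the final time $T$ and not at $t=0$. Here I would invoke the absolute continuity of $t\mapsto\|x(t)\|_L^2$ for $x\in X$ (legitimate because $X\hookrightarrow\mathcal{C}^0(\overline{I};L)$) together with $\frac{d}{dt}\|x(t)\|_L^2=2\langle\partial_t x(t),x(t)\rangle_{V',V}$ to write $\|x(0)\|_L^2=\|x(T)\|_L^2-2\int_I\langle\partial_t x(t),x(t)\rangle_{V',V}\,dt\le\|x(T)\|_L^2+2\|\partial_t x\|_{L^2(I;V')}\|x\|_{L^2(I;V)}$. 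Multiplying by $\alpha$, estimating $\alpha\|x(T)\|_L^2=\alpha^2 c^2\le M^2 c^2$ via $\alpha\le M$ for the first term, and $2\alpha M ab\le 2M^2ab\le M^2(a^2+b^2)$ via $\alpha\le M$ and Young's inequality for the second, yields $\alpha\|x(0)\|_L^2\le M^2(a^2+b^2+c^2)=M^2\|x\|_X^2$, i.e.\ $\alpha^{1/2}\|x(0)\|_L\le M\,\|x\|_X$. Therefore $T_3\le\|x(0)\|_L\,\|g\|_L=(\alpha^{1/2}\|x(0)\|_L)(\alpha^{-1/2}\|g\|_L)\le M\,\|x\|_X\,\alpha^{-1/2}\|g\|_L$; note that only boundedness of $A(t)$ and $\alpha\le M$ are used here, not coercivity.

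Finally I would combine the three bounds to get $\langle\mathcal{B}x,z\rangle_{Z',Z}=T_1+T_2+T_3\le M\,\|x\|_X\bigl(\sqrt2\,\|y\|_{L^2(I;V)}+\alpha^{-1/2}\|g\|_L\bigr)$, and one last Cauchy--Schwarz inequality in $\mathbb{R}^2$, applied to the vectors $(\sqrt2,1)$ and $(\|y\|_{L^2(I;V)},\alpha^{-1/2}\|g\|_L)$, gives $\sqrt2\,\|y\|_{L^2(I;V)}+\alpha^{-1/2}\|g\|_L\le\sqrt{2+1}\,\bigl(\|y\|_{L^2(I;V)}^2+\alpha^{-1}\|g\|_L^2\bigr)^{1/2}=\sqrt3\,\|z\|_Z$. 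This produces exactly the factor $\sqrt3$, whence $\|\mathcal{B}\|_{\mathcal{L}(X;Z')}\le\sqrt3\,M$. The only genuine difficulty is the passage from $\|x(0)\|_L$ to $\|x(T)\|_L$ in the treatment of $T_3$; the remaining steps are a careful tracking of constants through elementary inequalities.
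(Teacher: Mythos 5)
Your proof is correct and follows essentially the same route as the paper: both rely on the identity $\|x(0)\|_L^2=\|x(T)\|_L^2-2\int_I\langle\partial_t x,x\rangle_{V',V}\,dt$ together with $\alpha\le M$ to control the initial value, and both extract the constant $\sqrt3$ from a weighted Cauchy--Schwarz inequality in $\mathbb{R}^2$ after establishing $\|\mathcal{A}x\|_{Y'}^2\le 2M^2\|x\|_X^2$ and $\alpha\|x(0)\|_L^2\le M^2\|x\|_X^2$. The only difference is cosmetic: you apply the final Cauchy--Schwarz to the vector $(\sqrt2,1)$ after factoring out $M\|x\|_X$, whereas the paper applies it directly to $\bigl(\|\mathcal{A}x\|_{Y'},\sqrt{\alpha}\,\|x(0)\|_L\bigr)$.
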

\begin{proof}
For all  $x\in X$, $z = (y,g)\in Z = Y\times L$, we have 
\begin{align*}
\langle \mathcal{B}x,z\rangle_{Z',Z} &= \langle\mathcal{A}x,y\rangle_{Y',Y} + \langle x(0),g\rangle_L \le 
\Vert \mathcal{A} x \Vert_{Y'} \Vert y \Vert_Y + \Vert x(0) \Vert_L \Vert g \Vert_L\\
&\le \left(\Vert \mathcal{A} x \Vert_{Y'}^2 + \alpha \Vert x(0) \Vert_L^2 \right)^{1/2}  \left(\Vert y \Vert_{Y}^2 + \alpha^{-1} \Vert g \Vert_L^2 \right)^{1/2} 
\le \sqrt{3} M \Vert x \Vert_X\Vert z \Vert_Z,
\end{align*}
since $\Vert \mathcal{A} x \Vert_{Y'}^2\le 2M^2(M^{-2}\Vert \partial_t x \Vert_{Y'}^2 + \Vert x \Vert_{Y}^2)\le 2M^2\|x\|_X^2$ and $\alpha \Vert x(0) \Vert_L^2 \le
\alpha \Vert x(T) \Vert_L^2 +2\alpha\Vert \partial_t x \Vert_{Y'}\Vert x \Vert_{Y}
\le \alpha \Vert x(T) \Vert_L^2 + \alpha M(M^{-2}\Vert \partial_t x \Vert_{Y'}^2 + \Vert x \Vert_{Y}^2)\le M^2\|x\|_X^2$ (recall that $\alpha\le M$).
\end{proof}

\begin{lemma}[Inf-sup constant] \label{lem:infsup}
\eqref{eq:infsup} holds true with the inf-sup constant $\beta\ge \frac{\alpha^2}{M}(1+\kappa^2)^{-\frac12}$, where $\kappa:=\mathop{\mbox{\em ess~sup}}_{t\in I}\frac12\|A(t)A(t)^{-\mathrm{T}}-I\|_{\mathcal{L}(V';V')}$.
\end{lemma}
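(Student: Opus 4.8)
The plan is to verify \eqref{eq:infsup} by exhibiting, for each nonzero $x\in X$, a test element $z=(y,g)\in Z=Y\times L$ with $\langle\mathcal Bx,z\rangle_{Z',Z}\ge\beta\|x\|_X\|z\|_Z$. Since $\sup_{z\in Z}\frac{\langle\mathcal Bx,z\rangle_{Z',Z}}{\|z\|_Z}=\|\mathcal Bx\|_{Z'}$ and, because $Z=Y\times L$ carries the norm $\|(y,g)\|_Z^2=\|y\|_Y^2+\alpha^{-1}\|g\|_L^2$ and $\mathcal Bx=(\mathcal Ax,x(0))$, one has $\|\mathcal Bx\|_{Z'}^2=\|\mathcal Ax\|_{Y'}^2+\alpha\|x(0)\|_L^2$, this is the same as establishing the a priori bound $\beta^2\|x\|_X^2\le\|\mathcal Ax\|_{Y'}^2+\alpha\|x(0)\|_L^2$. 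The test element I would take is $g:=x(0)$ and, pointwise in $t$, $y(t):=A(t)^{-\mathrm T}\partial_t x(t)+x(t)=A(t)^{-\mathrm T}\bigl(\partial_t x(t)+A(t)^{\mathrm T}x(t)\bigr)$; note that for a selfadjoint family $A(t)=A(t)^{\mathrm T}$ this is just $y=A^{-1}(\partial_t x+Ax)=A^{-1}\mathcal Ax$, the adjoint--inverse lift of the residual, and the estimate is then essentially an equality, so all inequalities must be tracked carefully.

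For the lower bound on $\langle\mathcal Bx,z\rangle_{Z',Z}=\int_I\langle\partial_t x+Ax,\,A^{-\mathrm T}\partial_t x+x\rangle_{V',V}\,dt+\langle x(0),x(0)\rangle_L$, I would expand the integrand into the four terms $\langle\partial_t x,A^{-\mathrm T}\partial_t x\rangle$, $\langle\partial_t x,x\rangle$, $\langle Ax,A^{-\mathrm T}\partial_t x\rangle$, $\langle Ax,x\rangle$, and use: the inverse-coercivity $\langle\varphi,A(t)^{-\mathrm T}\varphi\rangle_{V',V}\ge\frac{\alpha}{M^2}\|\varphi\|_{V'}^2$ recalled just before the lemma (with $\varphi=\partial_t x(t)$); coercivity $\langle A(t)x(t),x(t)\rangle_{V',V}\ge\alpha\|x(t)\|_V^2$; the algebraic identity $\langle A(t)v,A(t)^{-\mathrm T}\varphi\rangle_{V',V}=\langle\varphi,v\rangle_{V',V}$ (valid for $v\in V$, $\varphi\in V'$ since $A^{\mathrm T}A^{-\mathrm T}=I$ on $V'$), which collapses the third term to $\langle\partial_t x,x\rangle$; and the integration-by-parts formula $\int_I\langle\partial_t x,x\rangle_{V',V}\,dt=\tfrac12\bigl(\|x(T)\|_L^2-\|x(0)\|_L^2\bigr)$, legitimate because $X\hookrightarrow\mathcal C^0(\overline I;L)$. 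The choice $g=x(0)$ cancels the $-\tfrac12\|x(0)\|_L^2$ produced twice, leaving
\[
\langle\mathcal Bx,z\rangle_{Z',Z}\;\ge\;\tfrac{\alpha}{M^2}\|\partial_t x\|_{Y'}^2+\alpha\|x\|_Y^2+\|x(T)\|_L^2\;=\;\alpha\|x\|_X^2,
\]
the three summands matching those of $\alpha\|x\|_X^2$ term by term by the very definition of $\|\cdot\|_X$.

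For the upper bound on $\|z\|_Z^2=\|y\|_{L^2(I;V)}^2+\alpha^{-1}\|x(0)\|_L^2$, I would use $\|A(t)^{-\mathrm T}\psi\|_V\le\alpha^{-1}\|\psi\|_{V'}$ to get $\|y(t)\|_V\le\alpha^{-1}\|\partial_t x(t)+A(t)^{\mathrm T}x(t)\|_{V'}$, and then rewrite $\partial_t x+A^{\mathrm T}x=\mathcal Ax-(A-A^{\mathrm T})x$ with $(A-A^{\mathrm T})x=(AA^{-\mathrm T}-I)(A^{\mathrm T}x)$, so that by the definition of $\kappa$ the skew defect satisfies $\|(A(t)-A(t)^{\mathrm T})x(t)\|_{V'}\le2\kappa\|A(t)^{\mathrm T}x(t)\|_{V'}\le2\kappa M\|x(t)\|_V$. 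Combining this with the boundedness estimate $\|\mathcal Ax\|_{Y'}\le\sqrt2\,M\|x\|_X$ and the bound $\alpha\|x(0)\|_L^2\le M^2\|x\|_X^2$ (both obtained exactly as in the proof of Lemma~\ref{lem:continuity}), and treating the $\mathcal Ax$-direction and the $\kappa$-defect direction as (near-)orthogonal contributions in $Y$ so that their sizes add in squares, one arrives at an estimate of the form $\|z\|_Z\le\frac{M}{\alpha}(1+\kappa^2)^{1/2}\|x\|_X$. Together with the lower bound this gives $\beta\ge\frac{\alpha\|x\|_X^2}{\|x\|_X\cdot\frac{M}{\alpha}(1+\kappa^2)^{1/2}\|x\|_X}=\frac{\alpha^2}{M}(1+\kappa^2)^{-1/2}$, as claimed.

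The main obstacle is precisely this last estimate: obtaining the clean closed form with the exact constant $M/\alpha$ and the exact multiplicative factor $(1+\kappa^2)^{1/2}$ — rather than a larger numerical constant, a power $M^{3/2}/\alpha^{5/2}$ coming from a careless treatment of the $\alpha^{-1}\|x(0)\|_L^2$ term, or an additive perturbation $1+c\kappa$ requiring $\kappa$ to be small — demands that every triangle inequality be replaced by a squared-norm split, that the weights balancing the $\partial_t x$-part, the $x$-part, the skew defect and the initial datum be chosen optimally, and that the non-selfadjointness be localized entirely into the single scalar $\kappa=\operatorname{ess\,sup}_{t}\tfrac12\|A(t)A(t)^{-\mathrm T}-I\|_{\mathcal L(V';V')}$; the naive energy argument (testing the equation with $x$ alone and bounding $\partial_t x=\mathcal Ax-Ax$ crudely) gives only a bound of order $\alpha$ with a spurious constant and no $\kappa$-dependence, so it does not suffice.
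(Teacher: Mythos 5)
Your choice of supremizer $z=(A(t)^{-\mathrm T}\partial_t x(t)+x(t),\,x(0))$ and your treatment of the lower bound are exactly the paper's: the four-term expansion, the inverse coercivity of $A^{-\mathrm T}$, the collapse of $\langle Ax,A^{-\mathrm T}\partial_t x\rangle$ to $\langle\partial_t x,x\rangle$, the integration by parts, and the cancellation of $-\|x(0)\|_L^2$ by the choice $g=x(0)$ all check out and give $\langle\mathcal Bx,z\rangle_{Z',Z}\ge\alpha\|x\|_X^2$. The gap is in the upper bound on $\|z\|_Z$. The step ``treating the $\mathcal Ax$-direction and the $\kappa$-defect direction as (near-)orthogonal contributions in $Y$ so that their sizes add in squares'' is not an argument: those contributions are not orthogonal, and you yourself concede in the last paragraph that the optimal squared-norm splitting is the missing piece. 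Worse, the route you set up cannot deliver the claimed constant even with optimal weights. Once you bound $\|y(t)\|_V\le\alpha^{-1}\|\partial_t x(t)+A(t)^{\mathrm T}x(t)\|_{V'}$ and then estimate $\alpha^{-1}\|x(0)\|_L^2$ separately via $\alpha\|x(0)\|_L^2\le M^2\|x\|_X^2$, you have irrevocably thrown away the sign information in the cross term: already at $\kappa=0$ your ingredients ($\|\mathcal Ax\|_{Y'}^2\le 2M^2\|x\|_X^2$ plus the initial-datum bound) give at best $\|z\|_Z^2\le 3M^2\alpha^{-2}\|x\|_X^2$, i.e.\ $\beta\ge\tfrac{\alpha^2}{\sqrt3\,M}$, not $\beta\ge\tfrac{\alpha^2}{M}$.

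The paper avoids this by never taking a pointwise triangle inequality on $y$. It bounds $\|y(t)\|_V^2\le\alpha^{-1}\langle A(t)y(t),y(t)\rangle_{V',V}$ by coercivity and expands the quadratic form with $y=A^{-\mathrm T}\partial_t x+x$: the cross terms reproduce $2\langle\partial_t x,x\rangle_{V',V}$, whose time integral $\|x(T)\|_L^2-\|x(0)\|_L^2$ lets the $-\alpha^{-1}\|x(0)\|_L^2$ cancel exactly the initial-datum part $\alpha^{-1}\|x(0)\|_L^2$ of $\|z\|_Z^2$ (rather than being paid for with an extra $M^2\alpha^{-2}\|x\|_X^2$), while the nonselfadjoint defect appears only through $\langle(A A^{-\mathrm T}-I)\partial_t x,\,\cdot\,\rangle$, bounded by $2\kappa\,\|\partial_t x\|_{L^2(I;V')}\|x\|_{L^2(I;V)}$ and absorbed by a weighted Young inequality into $(1+\kappa^2)M^2\alpha^{-2}\|x\|_X^2$. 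So to repair your proof you must keep the quadratic structure of $\|y\|_V^2$ (via coercivity of $A$, as in the paper, or by expanding the square and relating the $V'$-cross term to $\langle\partial_t x,x\rangle$, which is only possible in the selfadjoint case with the $A$-induced norm), rather than splitting $\partial_t x+A^{\mathrm T}x=\mathcal Ax-(A-A^{\mathrm T})x$ and invoking orthogonality that is not there.
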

\begin{proof}
For completeness, we briefly outline the proof which is similar to that of \cite[Prop.~3.1]{Tantardini_2016_a}, but with a different scaling for the norms. Let $x\in X$ and let us take
\[
z = (A(t)^{-\mathrm{T}}\partial_tx(t)+ x(t),x(0)) \in Z.
\]
Then, using the coercivity of $A(t)$ and of $A(t)^{-1}$, we have
\begin{align*}
\langle \mathcal{B}x,z\rangle_{Z',Z} &= \int_I \langle \partial_tx(t)+A(t)x(t),A(t)^{-\mathrm{T}}\partial_tx(t)+ x(t)\rangle_{V',V}dt + \|x(0)\|_L^2 \\
&\ge \alpha M^{-2} \|\partial_tx\|_{L^2(I;V')}^2 + \alpha\|x\|_{L^2(I;V)}^2 + \|x(T)\|_L^2 \ge \alpha \|x\|_X^2.
\end{align*}
Moreover, using again the coercivity of $A(t)$ and the boundedness of $A(t)$ and $A(t)^{-\mathrm{T}}$, we have
\begin{align*}
&\|z\|_Z^2 = \int_I \|A(t)^{-\mathrm{T}}\partial_tx(t)+ x(t)\|_V^2dt + \alpha^{-1}\|x(0)\|_L^2 \\ 
\le{}& \alpha^{-1} \int_I \langle A(t)\big(A(t)^{-\mathrm{T}} \partial_t x(t)+ x(t)\big),A(t)^{-\mathrm{T}}\partial_tx(t)+ x(t)\rangle_{V',V}dt + \alpha^{-1}\|x(0)\|_L^2 \\
\le{}& M\alpha^{-1}\|x\|_{L^2(I;V)}^2+M^2\alpha^{-2}\|\partial_t x\|_{L^2(I;V')}^2 + \alpha^{-1}\|x(T)\|_L^2 \\&+ 2\kappa \alpha^{-1} \|\partial_t x\|_{L^2(I;V')}\|x\|_{L^2(I;V)} \\
\le{}& (1+\kappa^2)M^2\alpha^{-2} \|x\|_X^2,
\end{align*}
and the conclusion is straightforward.
\end{proof}

\begin{remark}[Heat equation]
Sharp estimates of the inf-sup constant $\beta$ for the heat equation (with $\mu(t) \equiv 1$ on $I$ so that $\alpha=M=1$ and $\kappa=0$) can be found in \cite{UrbPa:12,ErnSV:17} using the above norms.
\end{remark}

The solution to the parabolic equation (\ref{evolution_problem}) is the global minimizer of the residual-based quadratic functional $\mathcal{E}:X\rightarrow \mathbb{R}$, defined such that 
\begin{equation} \label{eq:def_calE}
\mathcal{E}(x):=\frac12 \|\mathcal{B}x-(f,u_0)\|_{Z'}^2, \qquad \forall x\in X,
\end{equation}
where we equip the space $Z'=Y'\times L$ with the norm
\begin{equation}
\|(\phi,g)\|_{Z'}^2 := \|\phi\|_{L^2(I;V')}^2 + \alpha\|g\|_L^2.
\end{equation}
Since the functional $\mathcal{E}$ is strongly convex on $X$ with parameter $\beta^2>0$ owing to the inf-sup condition~\eqref{eq:infsup}, $\mathcal{E}$ admits a unique global minimizer in $X$, and since the operator $\mathcal{B}$ is surjective, the minimum value of $\mathcal{E}$ on $X$ is zero. 
In other words, the unique solution to~\eqref{evolution_problem}
can be equivalently characterized as follows:
\begin{equation}\label{minimisation_problem}
u=\underset{x\in X}{\textup{argmin}}\; \frac12 \left(\|\mathcal{A}x-f\|_{Y'}^2+\alpha\|x(0)-u_0\|_{L}^2\right).
\end{equation}


\section{Discrete Minimal Residual Galerkin formulation}
\label{sec:discrete}

In this section, we introduce a discrete energy based on a space semi-discrete Galerkin method to approximate the unique minimizer in~\eqref{minimisation_problem}. 
We consider finite-dimensional spaces $V_{h}$ and $S_{k}$ such that
\begin{equation}
V_{h}\subset V,\quad\quad S_{k}\subset S:= H^1(I),
\end{equation}
and we set $N_h:=\textup{dim}(V_h)$ and $N_k:=\textup{dim}(S_k)$.
Typically, $V_h$ is constructed using $\mathbb{P}_1$ Lagrange finite elements on a space 
mesh of $\Omega$ and $S_k$ is constructed using continuous, piecewise affine functions on a time mesh
of $I$. We are going to seek the discrete minimizer in the tensor-product space 
\begin{equation}
X_{hk}:= V_{h}\otimes S_{k} \subset X,
\end{equation}
which is of dimension $\textup{dim}(X_{hk})=N_h\times N_k$.

\subsection{Space semi-discrete Galerkin approximation}
Let us set 
\begin{equation}
X_h = V_h \otimes H^1(I), \quad Y_{h}:= V_h\otimes L^2(I) \equiv L^2(I;V_h), \quad Y_h'=V_h'\otimes L^2(I) \equiv L^2(I;V_h').
\end{equation} 
Since $V_h$ is a subspace of $V$ and owing to~\eqref{eq:Gelfand}, we have $Y_h \subset Y$ and $Y' \subset Y_h'$, where the second inclusion follows by restricting the action of linear forms on $V$ to $V_h$. Let us define $f_h\in Y_h'$ s.t. $f_h(t) = f(t)|_{V_h}$ a.e.~$t\in I$. Recalling the separated form~\eqref{eq:separate}, we have 
\begin{equation} \label{eq:def_fh}
f_h(t):=\sum_{1\le q\le Q} \lambda^{(q)}(t) f^{(q)}|_{V_h} \qquad\text{a.e.~$t\in I$}. 
\end{equation}
Let $u_{0h}$ be the $L$-orthogonal projection of $u_0$ onto $V_h$. 
Let $\mathcal{A}_h : X_{h} \rightarrow Y_h'$ be s.t.
\begin{equation} \label{eq:def_Ah}
(\mathcal{A}_{h} x_{h})(t):= (J_{V_h}\otimes \partial_t) x_{h}(t) + A_{h}(t) x_{h}(t) \in V_h', \qquad \text{a.e.~$t\in I$},
\end{equation}
where $J_{V_h}:V_h\rightarrow V_h'$ is the injection resulting from~\eqref{eq:Gelfand}, i.e.,  $\langle J_{V_h}v_h,w_h\rangle_{V_h',V_h} = \langle v_h,w_h\rangle_L$ for all $v_{h},w_{h}\in V_{h}$, and $A_{h}(t):V_{h}\rightarrow V_{h}'$ is the discrete counterpart of $A(t)$ s.t.~$\langle A_h(t)v_h,w_h \rangle_{V_h',V_h}=\langle A(t)v_h,w_h \rangle_{V',V}$. 
Let us introduce the space semi-discrete space $Z_h=Y_h\times L_h$ so that $Z_h'=Y_h'\times L_h'$, where $L_h$ coincides with $V_h$ as linear space but is equipped with the norm of $L$ (note that $L_h\subset L\equiv L'\subset L_h'$). Note that $Z_h\subset Z$. Let  $\mathcal{B}_{h} : X_{h}\rightarrow Z_h'$ be the operator 
defined for $x_h\in X_h$ by  
\begin{equation}
\mathcal{B}_h x_h = (\mathcal{A}_h x_h , x_h(0)) \in  Z_h' = Y_h' \times L_h', 
\end{equation}
and such that, for all $(y_h,g_h)\in Y_h\times L_h$, (compare with~\eqref{eq:def_calB_op})
\begin{align}
\langle \mathcal{B}_{h}x_{h},(y_h,g_h)\rangle_{Z_h',Z_h}&= \langle\mathcal{A}_hx_{h},y_h\rangle_{Y_h',Y_h} + \langle x_{h}(0),g_h\rangle_{L}= \langle \mathcal{B} x_{h},(y_h,g_h)\rangle_{Z',Z}.
 \label{eq:def_calB_op_h}
\end{align}
The space semi-discrete formulation is as follows: find $u_{h}\in X_{h}$ such that
\begin{equation}
\mathcal{B}_h u_h = (f_h,u_{0h}) \quad \text{in } Z_h'.\label{semi-discrete-equation}
\end{equation}
The well-posedness of this formulation is ensured by the following lemma,
where the subspace $X_h\subset X$ is equipped with the norm
$\|x_h\|_{X_h}^2 := \|x_h\|_{L^2(I;V)}^2+M^{-2}\| \partial_t x_h \|_{L^2(I;V_h')}^2+\alpha^{-1}\|x_h(T)\|_L^2$, and the subspace $Z_h\subset Z$ is equipped with the 
norm of $Z$.
Recall the inf-sup constant $\beta =  \frac{\alpha^2}{M}(1+\kappa^2)^{-\frac12}$ from Lemma~\ref{lem:infsup}.

\begin{lemma}\label{lem:Bh_properties}
The operator $\mathcal{B}_h : X_h\to Z_h'$ satisfies $\Vert \mathcal{B}_h \Vert_{\mathcal{L}(X_h;Z_h')} \le \sqrt{3} M,$
and 
\begin{subequations}\begin{align}
&  \inf_{x_h\in X_h}\sup_{z_h\in Z_h} \frac{\langle \mathcal{B}_hx_h,z_h\rangle_{Z_h',Z_h}}{\|x_h\|_{X_h}\|z_h\|_{Z}} \ge \beta, \label{eq:infsup_Bh}\\
&\forall z_h\in Z_h, \quad (\langle \mathcal{B}_hx_h,z_h\rangle_{Z_h',Z_h}=0, \; \forall x_h\in X_h) \; \Longrightarrow \; (z_h=0).\label{eq:infsup_Bh_bis}
\end{align}\end{subequations} 
\end{lemma}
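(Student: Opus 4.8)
The plan is to mimic the proofs of Lemmas~\ref{lem:continuity} and~\ref{lem:infsup} in the semi-discrete setting, exploiting the key identity~\eqref{eq:def_calB_op_h}, namely that $\langle \mathcal{B}_h x_h,(y_h,g_h)\rangle_{Z_h',Z_h} = \langle \mathcal{B} x_h,(y_h,g_h)\rangle_{Z',Z}$ whenever $x_h\in X_h$ and $(y_h,g_h)\in Z_h$. The boundedness estimate $\|\mathcal{B}_h\|_{\mathcal{L}(X_h;Z_h')}\le\sqrt3 M$ is then almost immediate: for $x_h\in X_h$ and $z_h=(y_h,g_h)\in Z_h$, the numerator equals $\langle\mathcal{A}x_h,y_h\rangle_{Y',Y}+\langle x_h(0),g_h\rangle_L$, and the same Cauchy--Schwarz chain as in Lemma~\ref{lem:continuity} applies verbatim, using that $\|x_h\|_{X_h}$ is defined with exactly the ingredients needed (note $\|\partial_t x_h\|_{L^2(I;V_h')}\le\|\partial_t x_h\|_{L^2(I;V')}$ would go the wrong way, so I should check the estimate $\|\mathcal{A}_h x_h\|_{Y_h'}$ is bounded by the $X_h$-norm directly; in fact $\|\mathcal{A}_h x_h\|_{Y_h'}^2\le 2(\|(J_{V_h}\otimes\partial_t)x_h\|_{Y_h'}^2+\|A_h x_h\|_{Y_h'}^2)$ and $\|(J_{V_h}\otimes\partial_t)x_h(t)\|_{V_h'}=\|\partial_t x_h(t)\|_{V_h'}$ by definition of $J_{V_h}$, while $\|A_h(t)v_h\|_{V_h'}\le\|A(t)v_h\|_{V'}\le M\|v_h\|_V$, so the bound goes through with the $X_h$-norm as defined).

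For the inf-sup condition~\eqref{eq:infsup_Bh}, I would follow the construction in Lemma~\ref{lem:infsup} but replace $A(t)^{-\mathrm{T}}$, which does not map into $V_h$, by its discrete analogue $A_h(t)^{-\mathrm{T}}:V_h'\to V_h$. Concretely, given $x_h\in X_h$ I would take the test function $z_h=\big(A_h(t)^{-\mathrm{T}}(J_{V_h}\otimes\partial_t)x_h(t)+x_h(t),\,x_h(0)\big)\in Z_h$, which is well-defined and lies in $Z_h$ because each component is valued in $V_h$ for a.e.~$t$. Then the lower bound on the numerator uses coercivity of $A_h(t)$ and of $A_h(t)^{-1}$ (which follow from the corresponding properties of $A(t)$ on the subspace $V_h$, with the same constants $\alpha,M$), yielding $\langle\mathcal{B}_h x_h,z_h\rangle_{Z_h',Z_h}\ge\alpha\|x_h\|_{X_h}^2$, and the upper bound on $\|z_h\|_Z^2$ uses boundedness of $A_h(t)$ and $A_h(t)^{-\mathrm{T}}$ together with the discrete version of the parameter $\kappa$, giving $\|z_h\|_Z^2\le(1+\kappa_h^2)M^2\alpha^{-2}\|x_h\|_{X_h}^2$ with $\kappa_h:=\mathop{\mbox{ess~sup}}_{t\in I}\frac12\|A_h(t)A_h(t)^{-\mathrm{T}}-I\|_{\mathcal{L}(V_h';V_h')}$.

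The main obstacle is therefore to control $\kappa_h$ by $\kappa$, i.e. to show $\kappa_h\le\kappa$ (or at least $\kappa_h$ bounded by a constant depending only on $\kappa$), so that the stated inf-sup constant $\beta=\frac{\alpha^2}{M}(1+\kappa^2)^{-\frac12}$ is genuinely $h$-independent. The point is that $A_h(t)A_h(t)^{-\mathrm{T}}-I$ on $V_h'$ is, after identification, the restriction/compression of $A(t)A(t)^{-\mathrm{T}}-I$ to the finite-dimensional subspace, and one must verify that this compression does not increase the operator norm. This should follow because $A_h(t)^{-\mathrm{T}}$ is obtained by a Galerkin projection: $\langle A_h(t)^{-\mathrm{T}}\varphi_h,w_h\rangle = \langle\varphi_h,A_h(t)^{-1}w_h\rangle$ is the Ritz-type projection associated with the form $\langle A(t)\cdot,\cdot\rangle$, and symmetry of the construction in the numerator/denominator estimates makes the constants inherit from the continuous level. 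I would spell this out carefully, since it is the only place where the finite-dimensionality could a priori cost something; everything else is a transcription of the continuous proofs. Finally,~\eqref{eq:infsup_Bh_bis} follows from~\eqref{eq:infsup_Bh} by a dimension count: $\mathcal{B}_h$ maps $X_h$ into $Z_h'$ and $\dim X_h=\dim Z_h'$ (both equal, after restricting to $X_{hk}$ or working with the relevant finite-dimensional time-discretization — more precisely, for fixed time variable the spaces match), so injectivity of $\mathcal{B}_h$, which is exactly~\eqref{eq:infsup_Bh}, is equivalent to surjectivity, hence to the stated implication; alternatively one argues directly that $\langle\mathcal{B}_h x_h,z_h\rangle=\langle\mathcal{B}x_h,z_h\rangle$ and invokes a transposed inf-sup for $\mathcal{B}_h^*$.
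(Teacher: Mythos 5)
Your treatment of the boundedness bound and of the inf-sup condition \eqref{eq:infsup_Bh} follows the paper's route: the paper likewise proves the norm bound ``as in Lemma~\ref{lem:continuity}'' and obtains \eqref{eq:infsup_Bh} by taking exactly your test function $z_h=(A_h(t)^{-\mathrm{T}}\partial_t x_h(t)+x_h(t),x_h(0))\in Z_h$. Your worry about whether the discrete constant $\kappa_h$ is controlled by $\kappa$ is a legitimate subtlety that the paper itself does not address (it simply asserts that ``the same arguments'' apply with the stated $\beta$); flagging it is fine, though your justification via ``symmetry of the construction'' remains to be written out and is not obviously as clean as you suggest, since $A_h^{-\mathrm{T}}$ is not the compression of $A^{-\mathrm{T}}$.

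The genuine gap is in your argument for \eqref{eq:infsup_Bh_bis}. The spaces $X_h=V_h\otimes H^1(I)$ and $Z_h=(V_h\otimes L^2(I))\times L_h$ are only \emph{semi}-discrete: the time variable is not discretized, so both are infinite-dimensional, and ``injectivity $\Leftrightarrow$ surjectivity by a dimension count'' is not available. This is precisely why \eqref{eq:infsup_Bh_bis} is stated as a separate hypothesis in the Banach--Ne\v{c}as--Babu\v{s}ka framework: in infinite dimensions the inf-sup condition gives injectivity and closed range, but density of the range must be proved independently. Your fallback (``invoke a transposed inf-sup for $\mathcal{B}_h^*$'') is circular, since that transposed condition is exactly what \eqref{eq:infsup_Bh_bis} asserts, and it cannot be inherited from the continuous statement because there one tests against all $x\in X$, not only $x_h\in X_h$. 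The paper instead proves \eqref{eq:infsup_Bh_bis} constructively, as in \cite[Thm.~6.6]{Ern_2004_a}: given $z_h=(y_h,g_h)$ annihilating $\mathcal{B}_h(X_h)$, one first tests with $x_h\in V_h\otimes C_0^\infty(I)$ to obtain a first-order ODE system satisfied by $y_h$ in the distributional sense, then with $x_h=tv_h$ to get $y_h(T)=0$, then with $x_h=ty_h$ to conclude $y_h=0$, and finally with $x_h=v_h$ to get $g_h=0$. (Alternatively, one could prove surjectivity of $\mathcal{B}_h$ directly by solving the underlying linear ODE system of size $N_h$, but that argument also has to be made; it is not a dimension count.) You need to supply one of these arguments for the proof to be complete.
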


\begin{proof}
The upper bound on $\Vert \mathcal{B}_h \Vert_{\mathcal{L}(X_h;Z_h')}$ is shown as in the proof of Lemma~\ref{lem:continuity}. 
To prove \eqref{eq:infsup_Bh}, one can use the same arguments as in the proof of Lemma~\ref{lem:infsup}
by picking in the supremizing set $z_{h} = (A_{h}(t)^{-\mathrm{T}}\partial_tx_{h}(t)+ x_{h}(t),x_{h}(0)) \in Y_h\times L_h=Z_h$. Finally, one can prove~\eqref{eq:infsup_Bh_bis} using the following arguments, as in \cite[Thm.~6.6]{Ern_2004_a}. Let $z_h=(y_h,g_h)\in Z_h$. Taking $x_h$ arbitrary in $V_h\otimes C_0^\infty(I)$ shows that $(J_{V_h}\otimes \partial_t)y_h - A_h^{-\mathrm{T}}y_h=0$ in $Y_h'$. Taking next $x_h=tv_h$ with $v_h$ arbitrary in $V_h$ proves that $y_h(T)=0$, and taking $x_h=ty_h$, one concludes that $y_h=0$. Finally, taking $x_h=v_h$ with $v_h$ arbitrary in $V_h$ yields $g_h=0$.
\end{proof}

Lemma~\ref{lem:Bh_properties} implies that $\mathcal{B}_h : X_h\to Z_h'$ is 
an isomorphism such that  
\begin{equation} 
\beta \Vert x_h \Vert_{X_h} \le \Vert \mathcal{B}_h x_h \Vert_{Z_h'} \le \sqrt{3} M  \Vert x_h \Vert_{X_h},\label{Bh-isomorphism}
\end{equation}
for all $x_h \in X_h$. The solution $u_h$ to the equation \eqref{semi-discrete-equation} is the unique minimizer of  
the discrete energy functional $\mathcal{E}_{h}:X_{h}\rightarrow \mathbb{R}$ defined for all $x_{h}\in X_{h}$ by
\begin{equation} \label{eq:def_calE_sdG}
\mathcal{E}_{h}(x_{h}):= \frac12 \Vert \mathcal{B}_h x_h - (f_h,u_{0h}) \Vert_{Z_h'}^2 
= \frac12 \left(\|\mathcal{A}_hx_h-f_h\|_{Y_h'}^2+\alpha\|x_h(0)-u_{0h}\|_{L}^2\right).
\end{equation}
An important property of this discrete energy functional is the strong convexity that is inherited from the continuous setting, uniformly with respect to the  space discretization parameter. More precisely, the functional $\mathcal{E}_h$ is strongly convex on $X_h$ with parameter $\beta^2>0$ owing to the inf-sup condition~\eqref{eq:infsup_Bh}. Since the operator $\mathcal{B}_h$ is surjective, the minimum value of $\mathcal{E}_h$ on $X_h$ is zero and is attained at $u_h$.

\begin{remark}[Norm $\|{\cdot}\|_{X_h}$]
The difference between the $\|{\cdot}\|_X$-norm and the
$\|{\cdot}\|_{X_h}$-norm lies in the use of the
dual norm in $V_h'$ and not in $V'$ to measure the time-derivative. Note that
$\|x_h\|_{X_h}\le \|x_h\|_{X}$, for all $x_h\in X_h$. The reason for this 
difference is that, as shown in 
\cite{Tantardini_2016_a}, the equivalence of the two norms, uniformly 
with respect
to the space discretization, holds true if and only if the  
$L$-orthogonal projection onto $V_h$ is $V$-stable. This uniform stability 
(with $V=H^1_0(\Omega)$ and $L=L^2(\Omega)$) is, in turn,
not known to hold true if general shape-regular meshes are used to build the
finite element space $V_h$; it does hold true if quasi-uniform meshes are used 
(as it is the case in the present numerical experiments). We emphasize that the
use of a discrete dual norm to measure the time-derivative is a general
feature that arises in the quasi-optimality of space semi-discrete
Galerkin methods for parabolic evolution problems
\cite{Tantardini_2016_a}, and
is not specific to the present setting.
\end{remark}

\subsection{Minimal residual Galerkin approximation}

An approximation  $u_{hk} \in X_{hk}$ of  $u_h \in X_h$ is now defined as the unique minimizer of the discrete energy functional $\mathcal{E}_h$ restricted to the subspace $X_{hk}$ of $X_h$, i.e.we look for
\begin{equation}
\label{minimisation_full_discrete}
u_{hk}=\underset{x_{hk}\in X_{hk}}{\textup{argmin}} \mathcal{E}_{h}(x_{hk})  = \underset{x_{hk}\in X_{hk}}{\textup{argmin}}  \frac12 
\left(\|\mathcal{A}_hx_{hk}-f_h\|_{Y_h'}^2+\alpha\|x_{hk}(0)-u_{0h}\|_{L}^2\right).
\end{equation}
We emphasize the use of the space semi-discrete test space $Y_h$ to measure the dual norm of the residual.
We obtain the following quasi-optimal error estimate.

\begin{lemma}[Error estimate] \label{lem:err_est_Gal}
Let $u_h$ be the unique solution to \eqref{semi-discrete-equation}, 
and let $u_{hk}\in X_{hk}$ be the unique minimizer of~\eqref{minimisation_full_discrete}. Then, we have 
\begin{equation}
\|u_h-u_{hk}\|_{X_h} \le C \inf_{x_{hk} \in X_{hk}} \|u_h-x_{hk}\|_{X_h},
\end{equation}
where $C = \frac{\sqrt{3}M}{\beta}$ is independent of the time discretization.
\end{lemma}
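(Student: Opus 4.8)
The plan is to run a standard C\'ea-type argument, exploiting that $u_{hk}$ minimizes $\mathcal{E}_h$ over the \emph{subspace} $X_{hk}\subset X_h$ while $u_h$ is the global minimizer over $X_h$ with $\mathcal{E}_h(u_h)=0$, and then converting energy bounds into norm bounds via the two-sided estimate~\eqref{Bh-isomorphism}.

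First I would rewrite the discrete energy as a pure residual: since $\mathcal{B}_h u_h = (f_h,u_{0h})$ by~\eqref{semi-discrete-equation} and $\mathcal{B}_h$ is linear, definition~\eqref{eq:def_calE_sdG} gives $\mathcal{E}_h(x_h) = \tfrac12\|\mathcal{B}_h x_h - (f_h,u_{0h})\|_{Z_h'}^2 = \tfrac12\|\mathcal{B}_h(x_h - u_h)\|_{Z_h'}^2$ for every $x_h\in X_h$; in particular this holds for $x_h=u_{hk}$ and for every $x_{hk}\in X_{hk}$, using $X_{hk}\subset X_h$.

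Next I would chain the bound~\eqref{Bh-isomorphism} with the minimality of $u_{hk}$. Fix an arbitrary $x_{hk}\in X_{hk}$. Applying the lower bound in~\eqref{Bh-isomorphism} to $u_h - u_{hk}\in X_h$ yields $\beta^2\|u_h-u_{hk}\|_{X_h}^2 \le \|\mathcal{B}_h(u_h-u_{hk})\|_{Z_h'}^2 = 2\mathcal{E}_h(u_{hk})$. Since $u_{hk}$ minimizes $\mathcal{E}_h$ over $X_{hk}$ and $x_{hk}\in X_{hk}$, we have $\mathcal{E}_h(u_{hk}) \le \mathcal{E}_h(x_{hk}) = \tfrac12\|\mathcal{B}_h(u_h-x_{hk})\|_{Z_h'}^2$, and the upper bound in~\eqref{Bh-isomorphism} bounds the right-hand side by $\tfrac32 M^2\|u_h - x_{hk}\|_{X_h}^2$. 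Collecting, $\beta^2\|u_h-u_{hk}\|_{X_h}^2 \le 3M^2\|u_h - x_{hk}\|_{X_h}^2$; taking square roots and then the infimum over $x_{hk}\in X_{hk}$ gives the claim with $C=\sqrt{3}M/\beta$.

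There is no genuine obstacle here: the estimate is a direct consequence of the uniform (in $k$) well-posedness of the space semi-discrete problem already established in Lemma~\ref{lem:Bh_properties}. The only point worth stressing is why $C$ is independent of the time discretization, namely that both the inf-sup constant $\beta$ (Lemma~\ref{lem:infsup}) and the boundedness constant $\sqrt{3}M$ (Lemma~\ref{lem:continuity} and Lemma~\ref{lem:Bh_properties}) are $k$-independent; any dependence on the space discretization is absorbed into the choice of the $\|\cdot\|_{X_h}$-norm, which is precisely why the estimate is stated in that norm rather than in $\|\cdot\|_X$.
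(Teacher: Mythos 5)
Your proof is correct and is essentially the paper's own argument: both rewrite $\mathcal{E}_h(x_{hk})=\tfrac12\|\mathcal{B}_h(x_{hk}-u_h)\|_{Z_h'}^2$ using $\mathcal{B}_h u_h=(f_h,u_{0h})$, invoke the minimality of $u_{hk}$ over $X_{hk}$, and sandwich with the two-sided bound \eqref{Bh-isomorphism}. The only cosmetic difference is that you work with squared quantities and take a square root at the end, whereas the paper chains the inequalities directly at the level of norms.
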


\begin{proof}
Using  \eqref{Bh-isomorphism}, we have
\begin{align*}
\|u_{hk} - u_h\|_{X_h} &\le \beta^{-1} \Vert \mathcal{B}_h (u_{hk}-u_{h}) \Vert_{Z_h'} =\beta^{-1} \min_{x_{hk} \in X_{hk}} 
  \Vert \mathcal{B}_h (x_{hk} - u_h) \Vert_{Z_h'} \\&\le \sqrt{3}M\beta^{-1} \min_{x_{hk} \in X_{hk}} 
  \Vert   x_{hk} - u_{h} \Vert_{X_h},
\end{align*}
which proves the assertion.
\end{proof}

The unique minimizer of the quadratic discrete minimization problem~\eqref{minimisation_full_discrete} can be characterized by a system of linear equations. To write this system, let us first introduce 
the Riesz isomorphism $R_{V_h'}:V_{h}\rightarrow V_{h}'$ such that $\langle R_{V_h'}v_{h},w_{h} \rangle_{V_{h}',V_{h}}=\langle v_{h},w_{h} \rangle_{V}$ for all $v_{h},w_{h}\in V_{h}$ (note that 
$R_{V_h'}$ differs from the injection $J_{V_h}$ introduced above). Let $R_{Y_h'} : Y_h \rightarrow Y_h'$ be the space-time Riesz isomorphism such that
\begin{equation}
R_{Y_h'} = R_{V_h'} \otimes I_{L^2},
\end{equation}
where $I_{L^2}$ is the identity operator in $L^2(I)$ (it is actually the Riesz isomorphism from $L^2(I)$ onto $L^2(I)'\equiv L^2(I)$).
The quadratic discrete minimization problem \eqref{minimisation_full_discrete} is equivalent to the following linear problem: find $u_{hk}\in X_{hk}$ such that
\begin{equation}
\label{minimisation_equivalent}
B_{hk} u_{hk}=g_{hk},
\end{equation}
with $B_{hk}:X_{hk}\rightarrow X_{hk}'$ and $g_{hk}\in X_{hk}'$ such that, for all $x_{hk},z_{hk}\in X_{hk}$,
\begin{subequations}
\begin{align}
\label{bilinear_formulation_full_LHS}
\langle B_{hk} x_{hk}, z_{hk} \rangle_{X_{hk}',X_{hk}}&=
\langle \mathcal{A}_hx_{hk},R_{Y_h'}^{-1}\mathcal{A}_hz_{hk}\rangle_{Y_h',Y_h} + \alpha\langle x_{hk}(0),z_{hk}(0)\rangle_{L}, \\
\label{bilinear_formulation_full_RHS}
\langle g_{hk}, z_{hk} \rangle_{X_{hk}',X_{hk}}
&= \langle f_h,R_{Y_h'}^{-1}\mathcal{A}_hz_{hk}\rangle_{Y_h',Y_h} + \alpha\langle u_{0h},z_{hk}(0)\rangle_{L}.
\end{align}
\end{subequations}

Let us briefly describe the algebraic realization of the discrete problem~\eqref{minimisation_equivalent}. 
Let $(\psi_i)_{1\leq i\leq N_h}$ be a basis of $V_h$ and let $(\phi_l)_{1\leq l \leq N_k}$ be a basis of $S_k$. We can then seek for the components of the unique solution $u_{hk}$ of~\eqref{minimisation_equivalent} 
in the basis $(\psi_i\otimes \phi_l)_{1\leq i \leq N_h,1\leq l \leq N_k}$ of $X_{hk}$, i.e., we seek $\vecu=(\vecu_{il})_{1\leq i \leq N_h,1\leq l \leq N_k} \in \mathbb{R}^{N_hN_k}$ such that 
\begin{equation}
u_{hk}=\sum_{i=1}^{N_h}\sum_{l=1}^{N_k}\vecu_{il} \psi_i\otimes\phi_l.
\end{equation}
We define the following matrices of size $N_h\times N_h$ (related to the space discretization): 
\begin{equation} \label{eq:Dh_Mh}
(\bold{D}_h)_{ij}=\langle \psi_j , \psi_i \rangle_{V}, \quad\quad(\bold{M}_h)_{ij}=\langle \psi_j, \psi_i \rangle_{L},
\end{equation}
and the following matrices of size $N_k\times N_k$ (related to the time discretization):
\begin{equation} \label{eq:Dk_Mk_Ok}
(\bold{D}_k)_{lm}=\int_I \phi_m'(t)\phi_l'(t)dt, \quad(\bold{M}_k)_{lm}=\int_I \phi_m(t)  \phi_l(t)dt,
\quad (\bold{O}_k)_{lm}= \phi_m(0)  \phi_l(0).
\end{equation}
Recalling the separated form~\eqref{eq:separate}, we introduce the following matrix of size $N_h\times N_h$:
\begin{equation} \label{eq:Ahp}
(\bold{A}_h^{(p)})_{ij}=\langle A^{(p)}\psi_j,\psi_i \rangle_{V',V},
\end{equation}
and the following matrices of size $N_k\times N_k$:
\begin{equation} \label{eq:Mkpp_Ekp}
(\bold{M}_k^{(p,p')})_{lm}=\int_I \mu^{(p)}(t)\mu^{(p')}(t) \phi_m(t) \phi_l(t)dt,
\quad (\bold{E}_k^{(p)})_{lm}=\int_I  \mu^{(p)}(t) \phi_m'(t) \phi_l(t) dt,
\end{equation}
for all $1\le p,p'\le P$. Then, we obtain the following symmetric positive-definite linear system in $\mathbb{R}^{N_h N_k}$:
\begin{equation}\label{eq:discus}
\matB\vecu= \vecg,
\end{equation}
with the matrix
\begin{align} \label{eq:def_mathbb_B}
\matB = {}& \bold{M}_h\bold{D}_h^{-1}\bold{M}_h\otimes\bold{D}_k + \sum_{1\le p\le P} 2\mathrm{sym} \big\{ (\bold{A}_h^{(p)})^{\rm T}\bold{D}_h^{-1}\bold{M}_h\otimes \bold{E}_k^{(p)} \big\} \nonumber \\
&+ \sum_{1\le p,p'\le P} (\bold{A}_h^{(p)})^{\rm T}\bold{D}_h^{-1}\bold{A}_h^{(p')} \otimes \bold{M}_k^{(p,p')} + \alpha\bold{M}_h\otimes \bold{O}_k ,
\end{align}
where $\mathrm{sym}(\bold{Z}_h \otimes \bold{Z}_k) = \frac12(\bold{Z}_h \otimes \bold{Z}_k +\bold{Z}_h^{\mathrm{T}} \otimes \bold{Z}_k^{\mathrm{T}})$ for any matrix $\bold{Z}_h$ of size $N_h \times N_h$ and 
any matrix $\bold{Z}_k$ of size $N_k \times N_k$,
and the right-hand side
\begin{equation}
\vecg = \sum_{1\le q\le Q} \bold{M}_h \bold{D}_h^{-1} \bold{f}_h^{(q)}\otimes \bold{e}_k^{(q)} + \sum_{\substack{1\le p\le P \\1\le q\le Q}}
(\bold{A}_h^{(p)})^{\mathrm{T}} \bold{D}_h^{-1} \bold{f}_h^{(q)} \otimes \bold{d}_k^{(p,q)} + \alpha \bold{u_0}_{h} \otimes \bold{i}_k,
\end{equation}
with the vectors $(\bold{f}_h^{(q)})_i = \langle f^{(q)}|_{V_h}, \psi_i\rangle_{V',V}= \langle f^{(q)}, \psi_i\rangle_{V',V}$, $(\bold{u_0}_{h})_{i} = \langle u_{0h}, \psi_i \rangle_L = \langle u_0, \psi_i \rangle_L$,
for all $1\le i\le N_h$, 
and $(\bold{e}_k^{(q)})_l = \int_I \lambda^{(q)}(t) \phi_l'(t)dt$,
$(\bold{d}_k^{(p,q)})_l = \int_I \mu^{(p)}(t) \lambda^{(q)}(t) \phi_l(t)dt$,
$(\bold{i}_k)_{l} = \phi_l(0)$, 
for all $1\le l\le N_k$.

\begin{example}[Heat equation]
Let us consider the heat equation where $P=1$, $\mu^{(1)}(t)\equiv1$ and $A^{(1)}=-\Delta$, and let us equip the space $V$ with the $H^1$-seminorm so that $\langle v,w\rangle_{V} = \int_\Omega \nabla v(x){\cdot}\nabla w(x) dx = \langle A^{(1)}v,w\rangle_{V',V}$. Then the expression of $\matB$ simplifies as follows:
\begin{equation}
\matB = \bold{M}_h\bold{D}_h^{-1}\bold{M}_h\otimes\bold{D}_k 
+ \bold{M}_h\otimes 2\mathrm{sym}(\bold{E}_k)  + 
\bold{D}_h \otimes \bold{M}_k + \alpha\bold{M}_h\otimes \bold{O}_k ,
\end{equation}
with the following matrix of size $N_k\times N_k$:
\begin{equation}
(\bold{E}_k)_{lm}=\int_I \phi_m'(t) \phi_l(t) dt.
\end{equation}
\end{example}

\section{Low-rank approximation}
\label{sec:low_rank_approximation}

In this section, we present the low-rank approximation method we use to approximate iteratively the unique minimizer of~\eqref{minimisation_full_discrete} (or, equivalently, the unique solution to the linear system~\eqref{eq:discus}). We consider here a greedy algorithm~\cite{Temlyakov_2008_a,Cances_2011_a,Falco_2012_a,Chinesta_2014_a} which is an iterative procedure such that, at each iteration 
$m\in \mathbb{N}^*$, one computes an approximation $u_{hk}^m\in X_{hk}$ of the solution $u_{hk}\in X_{hk}$ of~\eqref{minimisation_full_discrete} in the form
\begin{equation}
\label{decomposition_u}
u_{hk}^m(x,t)=\sum_{1\le n\le m} v_{h}^n(x)\otimes s_{k}^n(t),
\end{equation}
with $v_{h}^n\in V_h$ and $s_{k}^n\in S_k$ for all $1\leq n \leq m$. 
The algorithm can be outlined as follows: 

\begin{framed}
GREEDY ALGORITHM:

\medskip

\begin{enumerate}
 \item Set $u_{hk}^0 = 0$ and $m=1$. 
 \item  \normalfont Solve for
 \begin{equation}
\label{minimisation_full}
(v_h^m,s_k^m)\in\underset{(v_{h},s_{k})\in V_h \times  S_k}{\textup{argmin}}\mathcal{E}_{h}(u_{hk}^{m-1} + v_h \otimes s_k).
\end{equation}
Set $u_{hk}^m:= u_{hk}^{m-1} + v_h^m \otimes s_k^m$. 
\item Check convergence, and if not satisfied, set $m\leftarrow m+1$ and go to step (2).
\end{enumerate}
\end{framed}
The following relative stagnation-based stopping criterion is used with a tolerance $\epsilon_{\rm greedy} >0$:
\begin{equation}
\frac{\|v_h^m\otimes s_k^m\|_{X}}{\|u_{hk}^m\|_{X}}<\epsilon_{\rm greedy}.
\end{equation}
Using the general results from \cite{Cances_2011_a,Falco_2012_a}, one can verify that the iterations of the above greedy algorithm are well-defined using the discrete minimal residual formulation presented in Section~\ref{sec:discrete}. 
Recall that the uniqueness of the solution to the minimization problem (\ref{minimisation_full_discrete}) follows from the strong convexity of the functional $\mathcal{E}_{h}$, and the sequence $(u_{hk}^m)_{m\in\mathbb{N}}$ converges to $u_{hk}$ as $n$ goes to infinity. Actually, it can be checked that this convergence result still holds true in the infinite-dimensional setting. 

In the above greedy algorithm, the minimization problem \eqref{minimisation_full} is nonlinear. Therefore, it is not straightforward to solve it and in practice, one often considers an alternating minimization algorithm (see \cite{Uschmajew:12}), based on the following fixed-point iterative scheme:

\medskip
\begin{framed}
ALTERNATING MINIMIZATION ALGORITHM FOR \eqref{minimisation_full}: 

\begin{enumerate}
 \item Choose $s_k^{m,0} \in S_k$ randomly and set $p=1$.
 \item Let $v_h^{m,p}\in V_h$ be the unique solution to 
\begin{equation}
\label{minimisation_v}
v_h^{m,p}=\underset{v_{h}\in V_h}{\textup{argmin}}\ \mathcal{E}_{h}\left(u_{hk}^{m-1} + v_{h}\otimes s_k^{m,p-1}\right).
\end{equation}
Compute $s_k^{m,p}\in S_k$ to be the unique solution to
\begin{equation}
\label{minimisation_w}
s_k^{m,p}=\underset{s_{k}\in S_k}{\textup{argmin}}\ \mathcal{E}_{h}\left(u_{hk}^{m-1} +  v_{h}^{m,p}\otimes s_k\right).
\end{equation}
\item Check convergence, and if not satisfied, set $p \leftarrow p+1$ and go to step (2).
\end{enumerate}
\end{framed}
The following relative stagnation-based stopping criterion is used with a tolerance $\epsilon_{\rm alt}>0$:
\begin{equation}
\frac{\|v_h^{m,p}\otimes s_k^{m,p}-v_h^{m,p-1}\otimes s_k^{m,p-1}\|_{X}}{\|v_h^{m,p}\otimes s_k^{m,p}\|_{X}}<\epsilon_{\rm alt}.
\end{equation}
The cost of an iteration of the alternating minimization algorithm is of order $(N_h+ N_k)$. Provided the number of fixed-point iterations remains reasonably small, the cost of each iteration of the greedy algorithm can be estimated to scale also as $(N_h+ N_k)$. We will verify in our numerical experiments that this is indeed the case.

\begin{remark}[Matrix form]
The matrix form of problem (\ref{minimisation_full}) is as follows: 
\begin{align*}
(\bold{v}_h^m,\bold{s}_k^m) =  \underset{(\bold{v}_{h},\bold{s}_{k})\in \mathbb{R}^{N_h} \times \mathbb{R}^{N_k}}{\textup{argmin}}&\bigg\{\frac{1}{2}\big(\vecu_{hk}^{m-1} + \bold{v}_{h} \otimes \bold{s}_{k}\big)^{\mathrm{T}}  \matB\big( \vecu_{hk}^{m-1} + \bold{v}_{h} \otimes \bold{s}_{k}\big) \\ &- \big( \vecu_{hk}^{m-1} + \bold{v}_{h} \otimes \bold{s}_{k}\big)^{\mathrm{T}} \vecg\bigg\},
\end{align*}
where $\vecu_{hk}^{m-1}$ denotes the vector in $\mathbb{R}^{N_h N_k}$ containing the coordinates of $u_{hk}^{m-1}$ in the basis $\left(\psi_i\otimes \phi_l\right)_{1\leq i \leq N_h, 1\leq l \leq N_k}$. Similarly, the matrix form of problems~\eqref{minimisation_v} and~\eqref{minimisation_w} is as follows:
\begin{align*}
\bold{v}_h^{m,p}=\underset{\bold{v}_{h}\in \mathbb{R}^{N_h}}{\textup{argmin}}&\bigg\{\frac{1}{2}\big(\vecu_{hk}^{m-1} + \bold{v}_{h} \otimes \bold{s}_{k}^{m,p-1}\big)^{\mathrm{T}}  \matB\big( \vecu_{hk}^{m-1} + \bold{v}_{h} \otimes \bold{s}_{k}^{m,p-1}\big) \\&- \big( \vecu_{hk}^{m-1} + \bold{v}_{h} \otimes \bold{s}_{k}^{m,p-1}\big)^{\mathrm{T}} \vecg\bigg\}, \\
\bold{s}_k^{m,p} = \underset{\bold{s}_{k}\in \mathbb{R}^{N_k}}{\textup{argmin}}&\bigg\{\frac{1}{2}\big(\vecu_{hk}^{m-1} + \bold{v}_{h}^{m,p} \otimes \bold{s}_{k}\big)^{\mathrm{T}}  \matB\big( \vecu_{hk}^{m-1} + \bold{v}_{h}^{m,p} \otimes \bold{s}_{k}\big) \\&- \big( \vecu_{hk}^{m-1} + \bold{v}_{h}^{m,p} \otimes \bold{s}_{k}\big)^{\mathrm{T}} \vecg\bigg\}.
\end{align*}
\end{remark}

\section{Other discrete minimal residual methods}
\label{sec:other}

In this section, we describe for the purpose of numerical comparisons in Section~\ref{sec:numerical_results} two other discrete minimal residual approaches. The discrete method introduced in Section~\ref{sec:discrete} is henceforth referred to as Method 1. The first variant, called Method 2, hinges on 
a fully discrete Petrov--Galerkin setting as devised in \cite{Andreev:12,Andreev_13,Andreev_2014_a}. The second variant, called Method 3, uses the same space semi-discrete setting as Method 1, but the test space is now equipped with a simple Euclidean norm of the components on a basis of $V_h$; Method 3 has been introduced in \cite{Nouy_2010_a}. 

\subsection{Method 2: fully discrete Petrov--Galerkin method}
\label{sec:PG}

Let us set
\begin{equation}
Y_{hk} := V_h\otimes S_k\uP, \qquad Y_{hk}'=V_h'\otimes (S_k\uP)',
\end{equation}
where $S_k\uP$ is a finite-dimensional subspace of $L^2(I)$ so that 
$S_k\uP\subset L^2(I)\equiv L^2(I)' \subset (S_k\uP)'$. 
The injection $J_{S_k\uP} : L^2(I) \rightarrow (S_k\uP)'$ is such that 
$J_{S_k\uP} = R_{(S_k\uP)'} \circ \Pi_{S_k\uP}$ where $\Pi_{S_k\uP}$ is the $L^2(I)$-orthogonal projection from $L^2(I)$ onto $S_k\uP$ and 
$R_{(S_k\uP)'} : S_k\uP \rightarrow (S_k\uP)'$ is the Riesz isomorphism so that 
$\langle R_{(S_k\uP)'}q,r\rangle_{(S_k\uP)',S_k\uP} = \langle q,r\rangle_{L^2(I)}$, for all $q,r\in S_k\uP$. 
Let us set $\textup{dim}(S_k\uP)=N_k\uP$. 
Recalling the separated form~\eqref{eq:separate}, let us define $f_{hk}\in Y_{hk}'$ s.t.
\begin{equation}
f_{hk}(t) = \sum_{1\le q\le Q} (J_{S_k\uP}\lambda^{(q)})(t) f^{(q)}|_{V_h}
\qquad \text{a.e.~$t\in I$}.
\end{equation} 
Let $\mathcal{A}_{hk}:= (I_{V_h'}\otimes J_{S_k\uP})\mathcal{A}_h : X_{hk} \rightarrow Y_{hk}'$ where $I_{V_h'}$ is the identity operator in $V_h'$ and $\mathcal{A}_h$ is defined by~\eqref{eq:def_Ah}. 
We consider the discrete energy functional $\mathcal{E}_{hk}\ufdPG:X_{hk}\rightarrow \mathbb{R}$ defined as
\begin{equation} \label{eq:def_calE_fdPG}
\mathcal{E}_{hk}\ufdPG(x_{hk}):=\frac12 
\left(\|\mathcal{A}_{hk}x_{hk}-f_{hk}\|_{Y_{hk}'}^2+\alpha\|x_{hk}(0)-u_{0h}\|_{L}^2\right), \qquad \forall x_{hk}\in X_{hk}.
\end{equation}
The discrete minimization problem is as follows: find $u_{hk}\ufdPG\in X_{hk}$ such that
\begin{equation}
\label{minimisation_full_discrete_PG}
u_{hk}\ufdPG= \underset{x_{hk}\in X_{hk}}{\textup{argmin}} \; \mathcal{E}_{hk}\ufdPG(x_{hk}).
\end{equation}

\begin{remark}[Comparison of energies]
Since $f_{hk}=(I_{V_h'}\otimes J_{S_k\uP})f_h$ with $f_h$ defined by~\eqref{eq:def_fh}, we have 
$$
\|\mathcal{A}_{hk}x_{hk}-f_{hk}\|_{Y_{hk}'} = \|(I_{V_h'}\otimes J_{S_k\uP})(\mathcal{A}_{h}x_{hk}-f_{h})\|_{Y_{hk}'} \leq \|\mathcal{A}_{h}x_{hk}-f_{h}\|_{Y_{h}'},
$$
which implies that $\mathcal{E}_{hk}\ufdPG(x_{hk}) \le \mathcal{E}_{h}(x_{hk})$ for all $x_{hk}\in X_{hk}$.
\end{remark}

As shown in \cite{Andreev_13,Andreev_2014_a} in the case of time-independent and selfadjoint operators $A\in\mathcal{L}(V;V')$, the Hessian of the discrete energy $\mathcal{E}_{hk}\ufdPG$ induces a bilinear form that satisfies an inf-sup condition that degenerates with the parabolic CFL. In the general case with a time-dependent differential operator, the positivity of the inf-sup constant is not guaranteed a priori, which means that the discrete energy functional $\mathcal{E}_{hk}\ufdPG$ may be only convex in some unfavorable situations. This means that in such cases, global minimizers of~\eqref{minimisation_full_discrete_PG} exist but may not be unique. Any minimizer satisfies
the following system of linear equations: find $u_{hk}\ufdPG\in X_{hk}$ such that
\begin{equation}
\label{minimisation_equivalent_PG}
B_{hk}\ufdPG u_{hk}\ufdPG=g_{hk}\ufdPG,
\end{equation}
with $B_{hk}\ufdPG:X_{hk}\rightarrow X_{hk}'$ and $g_{hk}\ufdPG\in X_{hk}'$ such that, for all $x_{hk},z_{hk}\in X_{hk}$,
\begin{subequations}
\begin{align}
\label{bilinear_formulation_full_LHS_PG}
\langle B_{hk}\ufdPG x_{hk}, z_{hk} \rangle_{X_{hk}',X_{hk}}&=
\langle \mathcal{A}_{hk}x_{hk},R_{Y_{hk}'}^{-1}\mathcal{A}_{hk}z_{hk}\rangle_{Y_{hk}',Y_{hk}} + \alpha\langle x_{hk}(0),z_{hk}(0)\rangle_{L}, \\
\label{bilinear_formulation_full_RHS_PG}
\langle g_{hk}\ufdPG, z_{hk} \rangle_{X_{hk}',X_{hk}}
&= \langle f_{hk},R_{Y_{hk}'}^{-1}\mathcal{A}_{hk}z_{hk}\rangle_{Y_{hk}',Y_{hk}} + \alpha\langle u_{0h},z_{hk}(0)\rangle_{L},
\end{align}
\end{subequations}
with the space-time Riesz isomorphism 
$R_{Y_{hk}'}= R_{V_h'} \otimes J_{S_k\uP}: Y_{hk} \rightarrow Y_{hk}'$. Let us point out that, since $J_{S_k\uP} = R_{(S_k\uP)'}$ on $S_k\uP$, we have $R_{Y_{hk}'}= R_{V_h'} \otimes R_{(S_k\uP)'}$. 
Let us briefly describe the algebraic realization of the discrete problem~\eqref{minimisation_equivalent_PG}. 
Recall that $(\psi_i)_{1\leq i\leq N_h}$ is a basis of $V_h$ and $(\phi_l)_{1\leq l \leq N_k}$ is a basis of $S_k$. Let $(\phi_l\uP)_{1\leq l \leq N_k\uP}$ be a basis of $S_k\uP$.
In addition to the square matrices $\bold{M}_h$, $\bold{D}_h$, $\bold{O}_k$ defined by~\eqref{eq:Dh_Mh}, \eqref{eq:Dk_Mk_Ok}, \eqref{eq:Ahp}, we consider the square matrix $\bold{M}_k\uP$ of size $N_k\uP\times N_k\uP$ and the rectangular matrix $\bold{E}_k\uPG$ of size $N_k\uP \times N_k$ such that
\begin{equation}
(\bold{M}_k\uP)_{lm} = \int_I \phi\uP_m(t)\phi\uP_l(t)dt,
\qquad
(\bold{E}_k\uPG)_{lm} = \int_I \phi'_m(t)\phi\uP_l(t)dt,
\end{equation}
and the rectangular matrices $\bold{M}_k\uPGp$, for all $1\le p\le P$, of size $N_k\uP \times N_k$ such that
\begin{equation}
(\bold{M}_k\uPGp)_{lm} = \int_I \mu^{(p)}(t)\phi_m(t)\phi\uP_l(t)dt.
\end{equation}
Then, we obtain the following symmetric positive-definite linear system in $\mathbb{R}^{N_h N_k}$:
\begin{equation}\label{eq:discus_PG}
\matB\ufdPG\vecu\ufdPG= \vecg\ufdPG,
\end{equation}
with the matrix
\begin{align}
\matB\ufdPG = {}& \bold{M}_h\bold{D}_h^{-1}\bold{M}_h\otimes (\bold{E}_k\uPG)^{\mathrm{T}} (\bold{M}_k\uP)^{-1} \bold{E}_k\uPG \nonumber \\
&+ \sum_{1\le p\le P} 2\mathrm{sym}\big\{ (\bold{A}_h^{(p)})^{\rm T}\bold{D}_h^{-1}\bold{M}_h\otimes (\bold{M}_k\uPGp)^{\mathrm{T}} (\bold{M}_k\uP)^{-1} \bold{E}_k\uPG \big\}  \nonumber \\ 
&+ \sum_{1\le p,p'\le P} (\bold{A}_h^{(p)})^{\rm T}\bold{D}_h^{-1}\bold{A}_h^{(p')} \otimes (\bold{M}_k\uPGp)^{\mathrm{T}} (\bold{M}_k\uP)^{-1} \bold{M}_k\uPGpp + \alpha\bold{M}_h\otimes \bold{O}_k,
\end{align}
and the right-hand side
\begin{align}
\vecg\ufdPG = {}& \sum_{1\le q\le Q} \bold{M}_h\bold{D}_h^{-1}\bold{f}_h^{(q)} \otimes (\bold{E}_k\uPG)^{\mathrm{T}} (\bold{M}_k\uP)^{-1} \bold{e}_k\uPq \nonumber\\&+ \sum_{\substack{1\le p\le P \\1\le q\le Q}}
(\bold{A}_h^{(p)})^{\mathrm{T}} \bold{D}_h^{-1} \bold{f}_h^{(q)} \otimes (\bold{M}_k\uPGp)^{\mathrm{T}} (\bold{M}_k\uP)^{-1} \bold{d}_k\uPq \nonumber \\&+ \alpha \bold{u_0}_h \otimes \bold{i}_k,
\end{align}
with $(\bold{e}_k\uPq)_l = \int_I \lambda^{(q)}(t) (\phi_l\uP)'(t)dt$ and
$(\bold{d}_k\uPq)_l = \int_I \lambda^{(q)}(t) \phi_l\uP(t)dt$,
for all $1\le l\le N_k\uP$.

\begin{remark}[Lowest-order Petrov--Galerkin discretization] \label{rem:CN}
Assume that $S_k$ is composed of continuous, piecewise affine functions and that $S_k\uP$ is composed of piecewise constant functions on the same time mesh so that $\textup{dim}(S_k\uP)=N_k-1$. This corresponds to the well-known Crank--Nicolson time scheme. Then, one can readily verify that
\begin{equation}
(\bold{E}_k\uPG)^{\mathrm{T}} (\bold{M}_k\uP)^{-1} \bold{E}_k\uPG = \bold{D}_k,
\qquad
(\bold{M}_k\uPGp)^{\mathrm{T}} (\bold{M}_k\uP)^{-1} \bold{E}_k\uPG = \bold{E}_k^{(p)},
\end{equation}
for all $1\le p\le P$, where $\bold{D}_k$ is defined in~\eqref{eq:Dk_Mk_Ok} and 
$\bold{E}_k^{(p)}$ in~\eqref{eq:Mkpp_Ekp}.
As a consequence, there is only one term composing the matrices $\matB$ and $\matB\ufdPG$ that differs, namely the time matrix in the double summation over $p,p'$ where this matrix is $\bold{M}_k^{(p,p')}$ for $\matB$ and is $(\bold{M}_k\uPGp)^{\mathrm{T}} (\bold{M}_k\uP)^{-1} \bold{M}_k\uPGpp$ for $\matB\ufdPG$. Even for the heat equation with $P=1$ and $\mu^{(1)}(t)\equiv1$, these matrices, which become, respectively, $\bold{M}_k$ and $(\bold{M}_k\uPG)^{\mathrm{T}} (\bold{M}_k\uP)^{-1} \bold{M}_k\uPG$ with
$(\bold{M}_k\uPG)_{lm} = \int_I \phi_m(t)\phi\uP_l(t)dt$, are still different. 
Note that $\bold{M}_k \ge (\bold{M}_k\uPG)^{\mathrm{T}} (\bold{M}_k\uP)^{-1} \bold{M}_k\uPG$ in the sense of quadratic forms, which is compatible with our above observation on the discrete energies that $\mathcal{E}_{hk}\ufdPG(x_{hk}) \le \mathcal{E}_{h}(x_{hk})$ for all $x_{hk}\in X_{hk}$.
As observed in \cite{Andreev:12,Andreev_13,Andreev_2014_a}, uniform stability with respect to the time discretization is not guaranteed, but this can
be fixed, e.g., by constructing the discrete test space $S_k\uP$ 
using a time mesh that is twice as fine as that used for the 
discrete trial space.
\end{remark}

\subsection{Method 3: an unpreconditioned space semi-discrete Galerkin method}

We consider the same space semi-discrete setting as in Section~\ref{sec:discrete} but we now equip the space $V_h$ with the Euclidean norm of the components on the basis $(\psi_i)_{1\le i\le N_h}$ instead of considering as before the norm induced by $V$. The main motivation for this change is that it avoids the appearance of the inverse stiffness matrix $\bold{D}_h^{-1}$ in the linear system.  We obtain the following symmetric positive-definite linear system in $\mathbb{R}^{N_hN_k}$:
\begin{equation}\label{eq:discus_3}
\matB\uun\vecu\uun= \vecg\uun,
\end{equation}
with the matrix
\begin{align}
\matB\uun = {}& (\bold{M}_h\bold{I}_h\bold{M}_h)\otimes\bold{D}_k + \sum_{1\le p\le P} 2\mathrm{sym} \big\{ ((\bold{A}_h^{(p)})^{\rm T}\bold{I}_h\bold{M}_h)\otimes \bold{E}_k^{(p)} \big\} \nonumber \\
&+ \sum_{1\le p,p'\le P} ((\bold{A}_h^{(p)})^{\rm T}\bold{I}_h\bold{A}_h^{(p')}) \otimes \bold{M}_k^{(p,p')} + \alpha\bold{M}_h\otimes \bold{O}_k ,
\end{align}
and the right-hand side
\begin{equation}
\vecg\uun = \sum_{1\le q\le Q} \bold{M}_h \bold{I}_h \bold{f}_h^{(q)}\otimes \bold{e}_k^{(q)} + \sum_{\substack{1\le p\le P \\1\le q\le Q}}
(\bold{A}_h^{(p)})^{\mathrm{T}} \bold{I}_h \bold{f}_h^{(q)} \otimes \bold{d}_k^{(p,q)} + \alpha \bold{u_0}_{h} \otimes \bold{i}_k,
\end{equation}
where $\bold{I}_h$ is the identity matrix of size $N_h\times N_h$. The present formulation is chosen for illustrative purposes; in practice, one can also replace $\bold{D}_h^{-1}$ by another matrix. 

\section{Numerical results}
\label{sec:numerical_results}

For all the test cases, we consider the space domain $\Omega=(0,1)\times(0,1)$, the time interval $I=[0,1]$, and the functional spaces $V=H^1_0(\Omega)$ and $L=L^2(\Omega)$. We consider first the heat equation, where the differential operator $A$ is time-independent and selfadjoint, then we consider a time-oscillatory diffusion problem, where the operator is time-dependent and selfadjoint, and finally a convection-diffusion equation, where the operator is time-independent and non-selfadjoint. The scaling factor for the contribution of the initial condition to the residual functional is always taken to be $\alpha:=1$. Let $\mathcal{T}_h$ be a shape-regular mesh of the domain $\Omega$; in what follows, we consider uniform meshes composed of square cells. The finite-dimensional finite element subspace $V_h\subset V$ of dimension $N_h$ is composed of continuous, piecewise bilinear functions on $\mathcal{T}_h$ vanishing at the boundary. Let $\mathcal{T}_k$ be a mesh of the interval $I$; for simplicity, we consider uniform meshes in time. The finite-dimensional subspace $S_k\subset H^1(I)$ of dimension $N_k$ is composed of continuous, piecewise affine functions on $\mathcal{T}_k$. 
In what follows, all the norms of residuals of algebraic quantities are evaluated using the Euclidean norm in $\mathbb{R}^{N_hN_k}$ which is denoted $\|\cdot\|_{\ell^2}$. When comparing to Method 2 (see Section~\ref{sec:PG}), we
considered the Crank--Nicolson time scheme discussed in Remark~\ref{rem:CN}. 
We also performed systematic comparisons with the uniformly stable variant 
using a finer time mesh for the test space, but we did not observe any 
significant difference in the results obtained for all the test cases 
considered herein.





\subsection{Test case 1: heat equation with manufactured solution}

We consider the heat equation with the time-independent, selfadjoint operator $A=-\Delta$. The initial condition is zero and the source term is evaluated from the following manufactured solution:
\begin{equation}
\label{manufactured_sol_1}
u(x,y,t)=\sum_{1\le n\le 10} n^{-4}\textup{sin}(\pi n^3 t)\textup{sin}(\pi n x)\textup{sin}(\pi n y).
\end{equation}
The discretization parameters are $N_h=(2^6)^2$ and $N_k=2^{13}$, and the stopping tolerances are $\epsilon_{\rm greedy}=10^{-5}$ and $\epsilon_{\rm alt}=5\times 10^{-2}$.

\begin{figure}[ht]
\begin{center}
\includegraphics[scale=0.2]{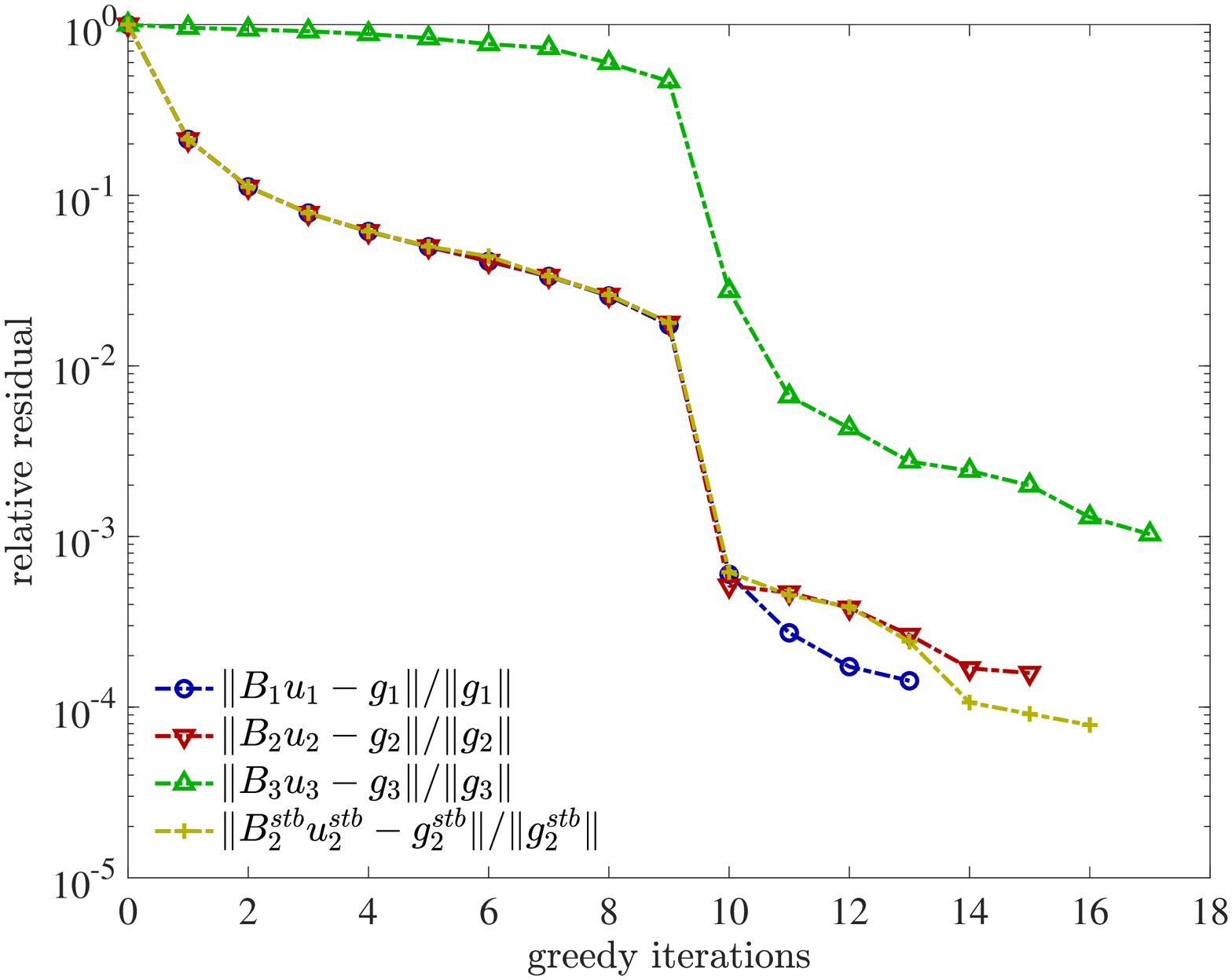} \qquad
\includegraphics[scale=0.2]{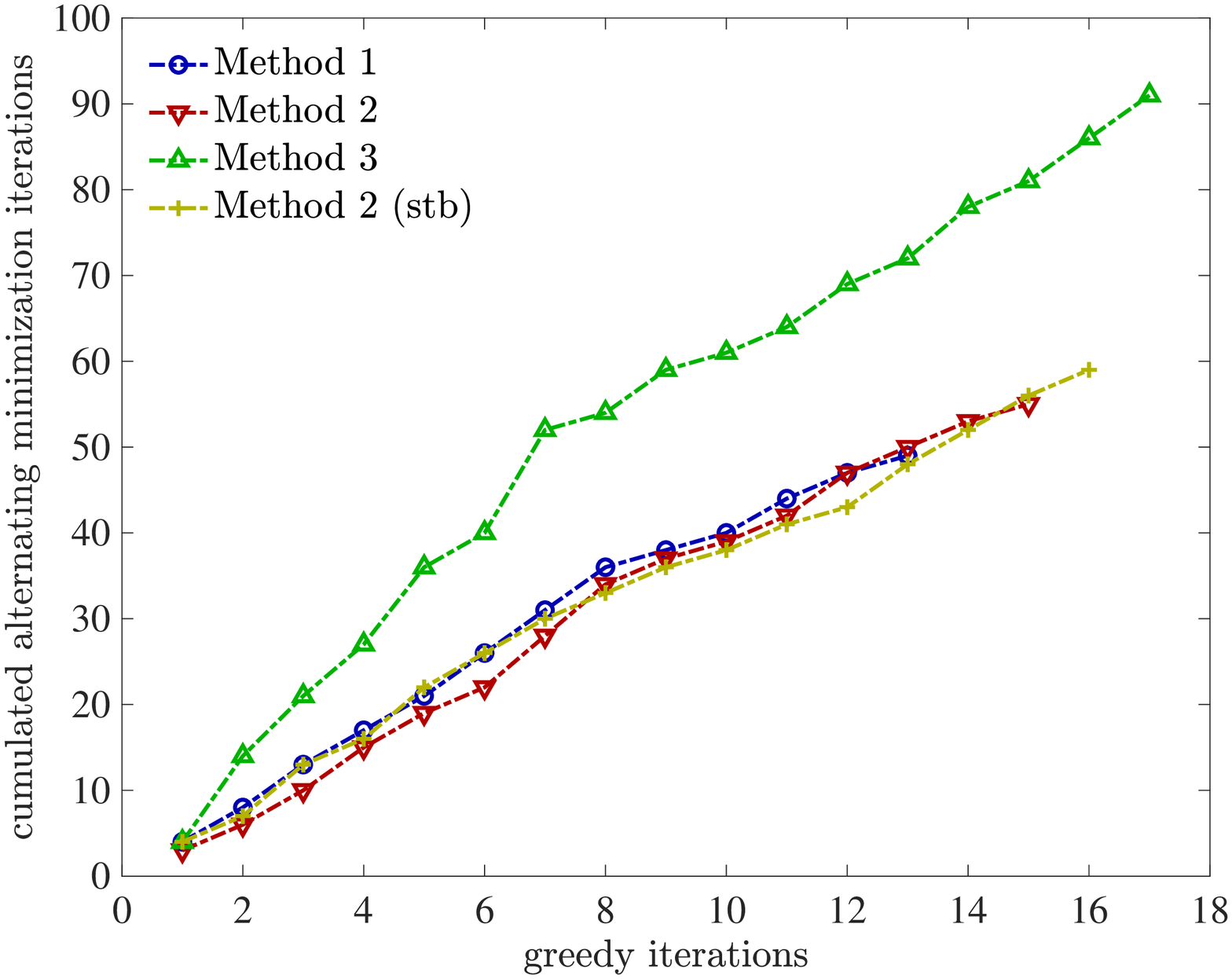}
\caption{Test case 1. Left: relative residual at each iteration of the greedy algorithm. Right: cumulated number of alternating minimization iterations in the greedy algorithm.}
\label{fig:heat_reserr}
\end{center}
\end{figure}

The left panel of Figure~\ref{fig:heat_reserr} presents the decrease of the relative residual as a function of the number of greedy iterations for Methods 1, 2 and 3. More precisely, we plot $\|\matB_i \vecu_i^m - \vecg_i\|_{\ell^2} /\|\vecg_i\|_{\ell^2}$ where $i\in\{1,2,3\}$ is the method index and $m$ is the greedy iteration counter. We notice that the three methods take about the same number of iterations (13, 15, and 17, respectively). This number is slightly larger than the space-time rank of the manufactured exact solution which is equal to 10. However, the relative residual takes larger values for Method 3 than for Methods 1 and 2. The right panel of Figure~\ref{fig:heat_reserr} presents the cumulated number of alternating minimization iterations in the greedy algorithm for Methods 1, 2 and 3. We observe that this number is about the same for Methods 1 and 2, whereas it is about 1.8 times larger for Method 3. Therefore, the use of a preconditioner, although it requires some additional computational effort, is beneficial to the efficiency of the overall behavior of the greedy algorithm. It is interesting to notice that with Methods 1 and 2, we have solved at convergence of the greedy algorithm about 50 linear systems in space, which is about $0.25\%$ of the amount that would have been solved by using an implicit time-stepping method (recall that $N_k=2^{13}$). We can make two further remarks concerning the decrease of the (relative) residual. First, we can decompose the residual of the space-time linear system as follows:
\begin{equation}
\matB_i \vecu_i^m - \vecg_i = (\matB_i^{\mathrm{pde}} \vecu_i^m - \vecg_i^{\mathrm{pde}}) + (\matB_i^{\mathrm{ic}} \vecu_i^m - \vecg_i^{\mathrm{ic}}),
\end{equation}
where we have written $\matB_i = \matB_i^{\mathrm{pde}} + \matB_i^{\mathrm{ic}}$ and $\vecg_i = \vecg_i^{\mathrm{pde}} + \vecg_i^{\mathrm{ic}}$ to distinguish the contribution of the differential operator from that of the initial condition. Our results (not displayed for brevity) show that after a few greedy iterations, the two contributions have about the same size. Moreover, as the greedy iteration approaches convergence, there is some compensation between the two contributions to the relative residual in Method 1 (but not for Method 2) since they have a size which is about one order of magnitude larger than the relative residual itself. As a further comparison, we considered the quantities 
\begin{equation} \label{eq:def_rim}
r_i^m = \|\matB_1 \vecu_i^m - \vecg_1\|_{\ell^2} / \|\vecg_1\|_{\ell^2},
\end{equation} 
which represent the relative residual for the linear system originating from Method 1 when the iterates produced by Method $i\in\{1,2,3\}$ are inserted into the residual. As expected from the MinRes formulation, $r_1^m\le \min(r_2^m,r_3^m)$ for all $m\ge0$, and as the greedy iterations approach convergence, $r_1^m$ reaches the value $4\times 10^{-5}$, whereas $r_2^m$ and $r_3^m$ reach a value of $4\times10^{-4}$ and $10^{-4}$, respectively.  

\begin{figure}[ht]
\begin{center}
\includegraphics[scale=0.2]{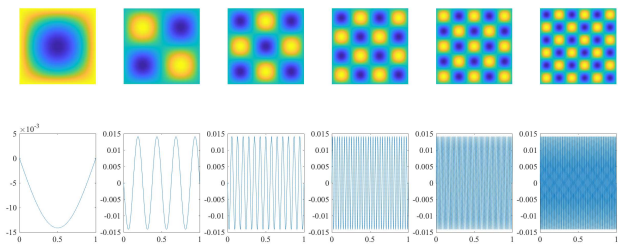} 
\caption{Test case 1: first six modes in space (top row) and in time (bottom row) for Method 1.}
\label{fig:heat_modes}
\end{center}
\end{figure}

Figure~\ref{fig:heat_modes} presents the first six space and time modes for Method 1. The first six modes obtained with Method 2 are essentially the same, whereas some differences, especially in the space modes, can be observed with Method 3. This indicates that the preconditioner plays a relevant role in the exploration of the discrete trial space.  

\begin{figure}[ht]
\begin{center}
\includegraphics[scale=0.2]{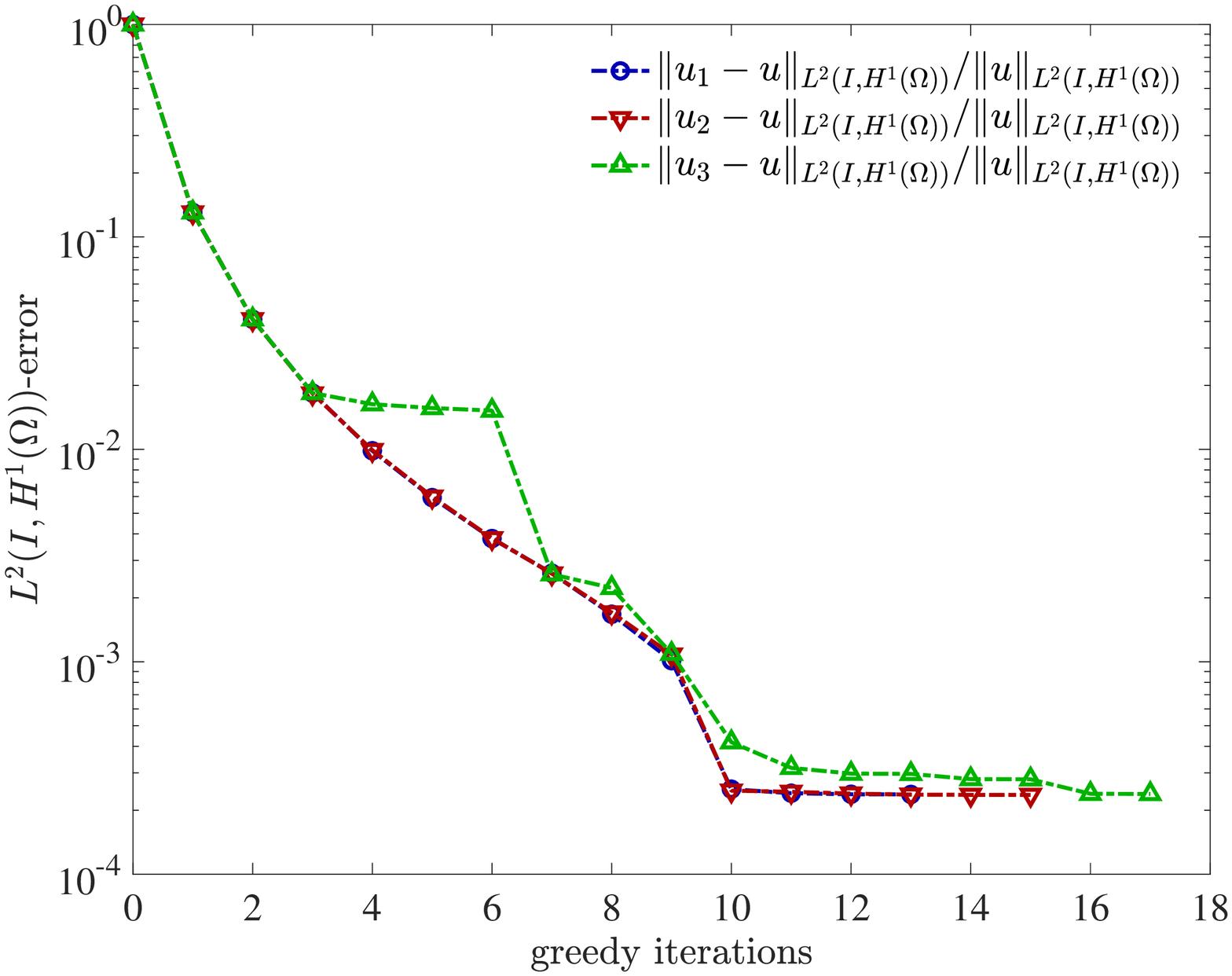} \qquad
\includegraphics[scale=0.2]{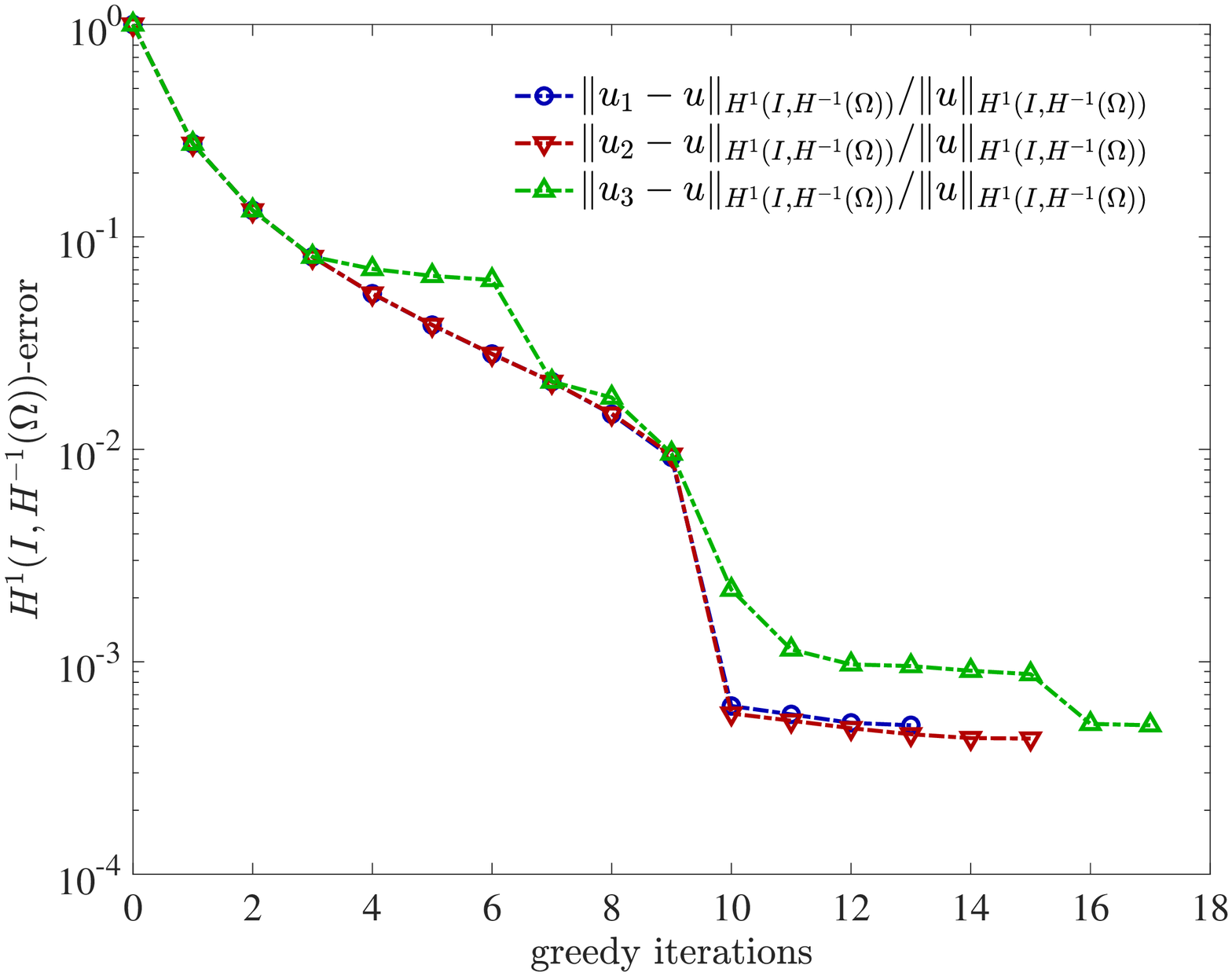} 
\caption{Test case 1: comparison of the errors produced by Methods 1, 2, 3 in two norms: $L^2(I;H^1(\Omega))$ (left) and $H^1(I;H^{-1}(\Omega))$ (right); in both cases, the curves for Methods~1 and~2 almost overlap.}
\label{fig:heat_err}
\end{center}
\end{figure}

Figure~\ref{fig:heat_err} reports the normalized errors $(u_{hk,i}^m - u)$ as a function of the iteration counter $m$ of the greedy algorithm where, as above, the additional subscript $i\in\{1,2,3\}$ indicates which method has been used. The errors are measured in the $L^2(I;H^1(\Omega))$- and $H^1(I;H^{-1}(\Omega))$-norms. The three methods produce in both norms relatively close errors, and the error for Method 3 is always a bit larger. At convergence of the greedy algorithm, both errors are very small, namely $2\times 10^{-4}$ in the $L^2(I;H^1(\Omega))$-norm and $5\times 10^{-4}$ in the $H^1(I;H^{-1}(\Omega))$-norm.

\begin{figure}[ht]
\begin{center}
\includegraphics[scale=0.2]{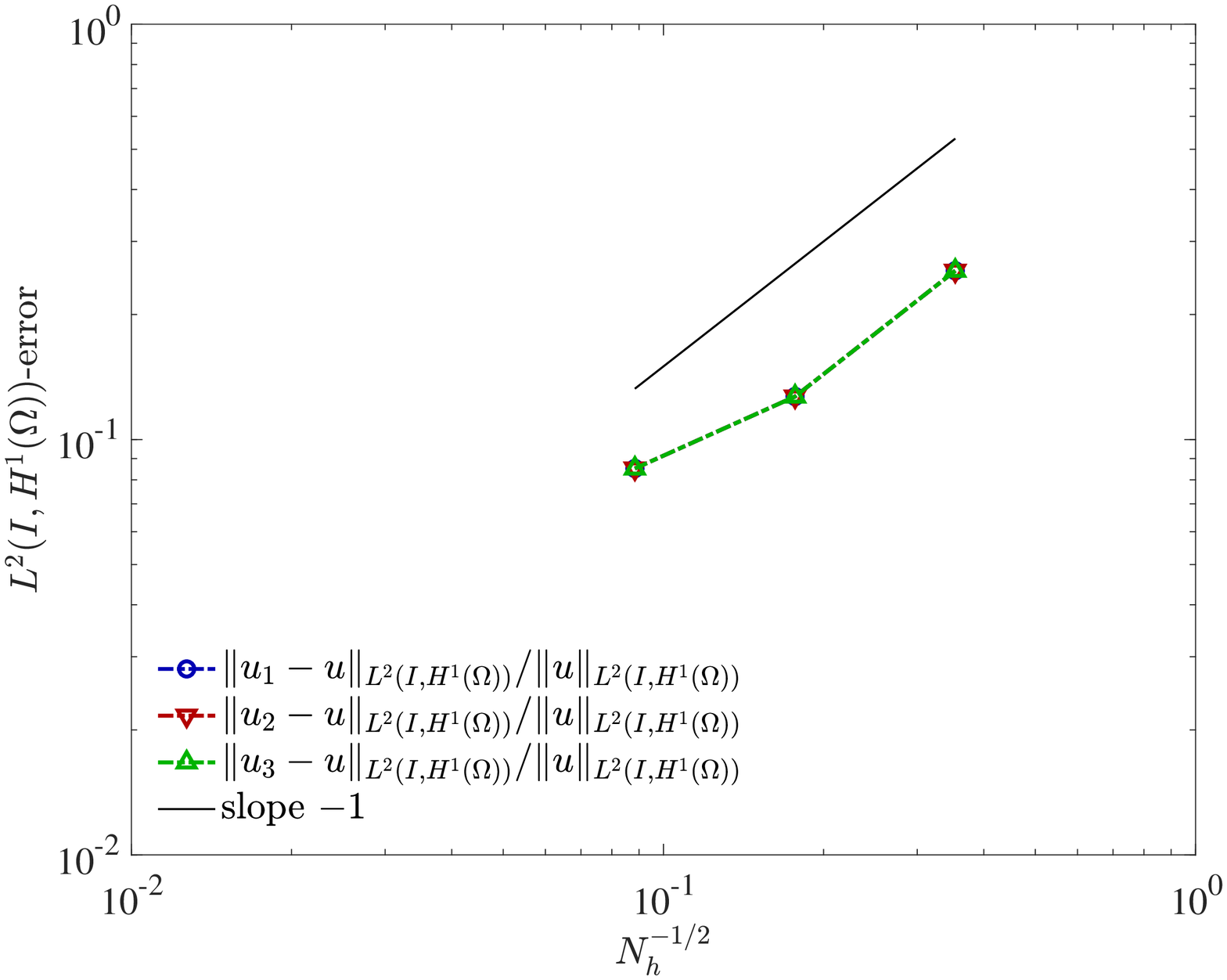} \qquad
\includegraphics[scale=0.2]{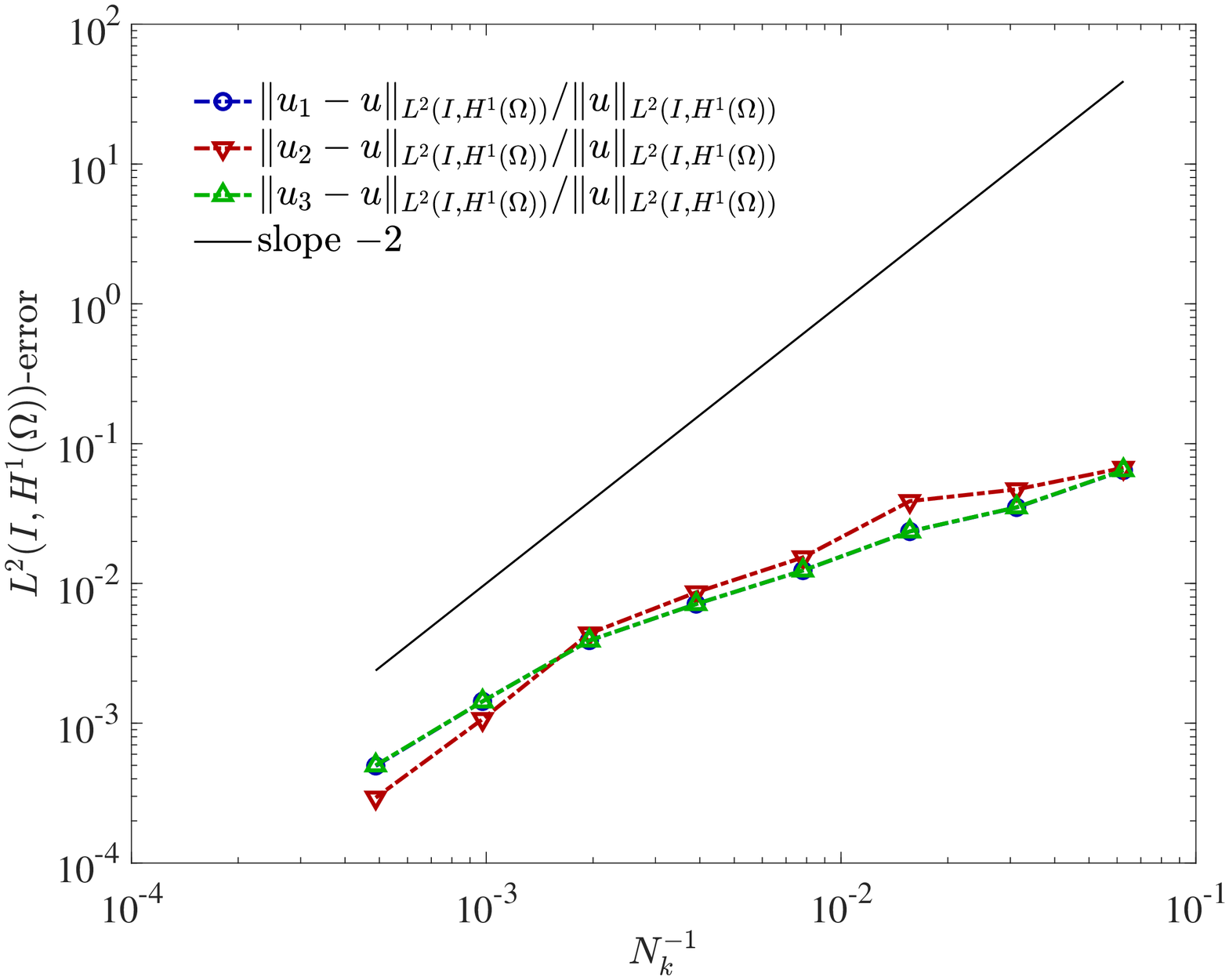}  \\
\includegraphics[scale=0.2]{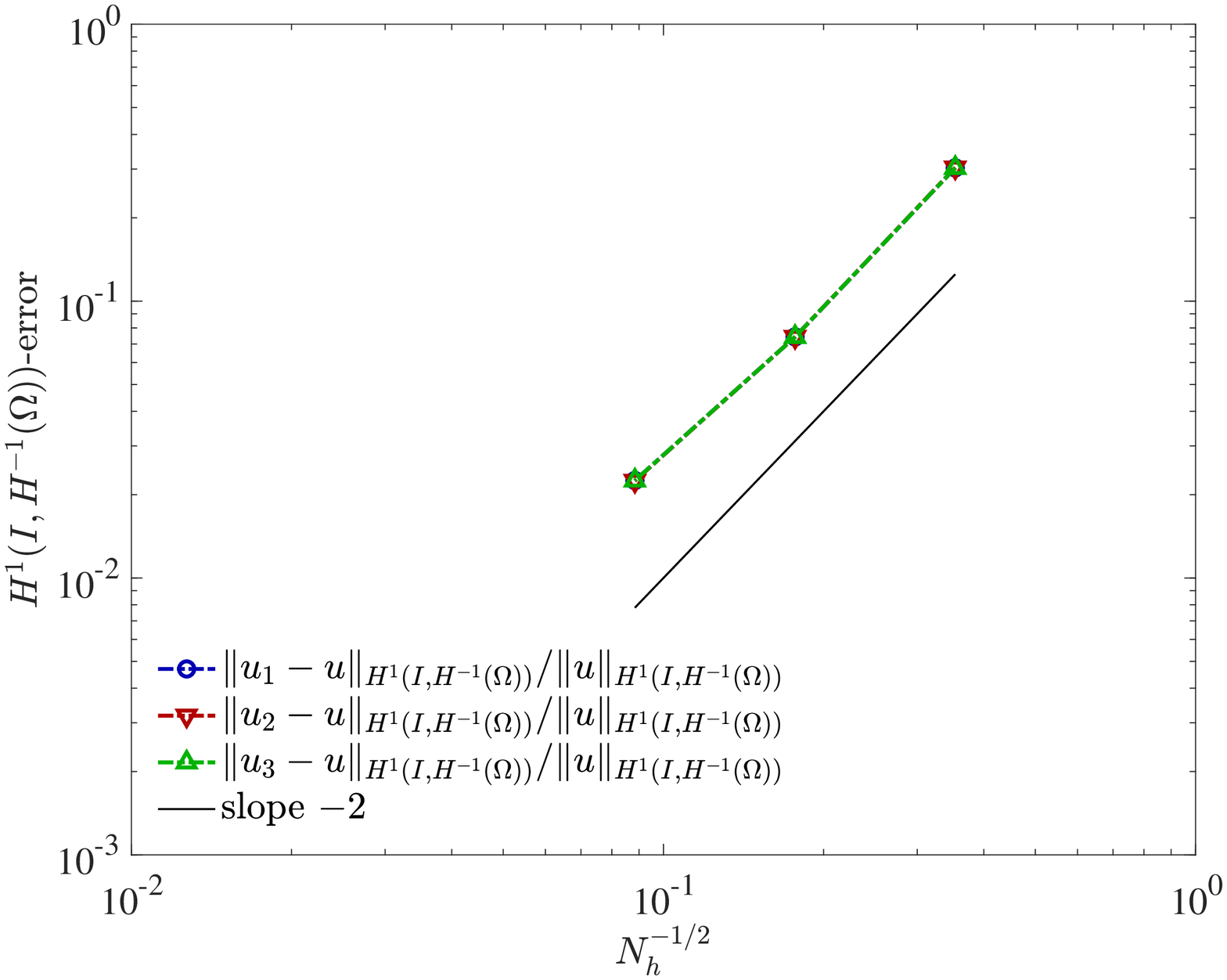} \qquad
\includegraphics[scale=0.2]{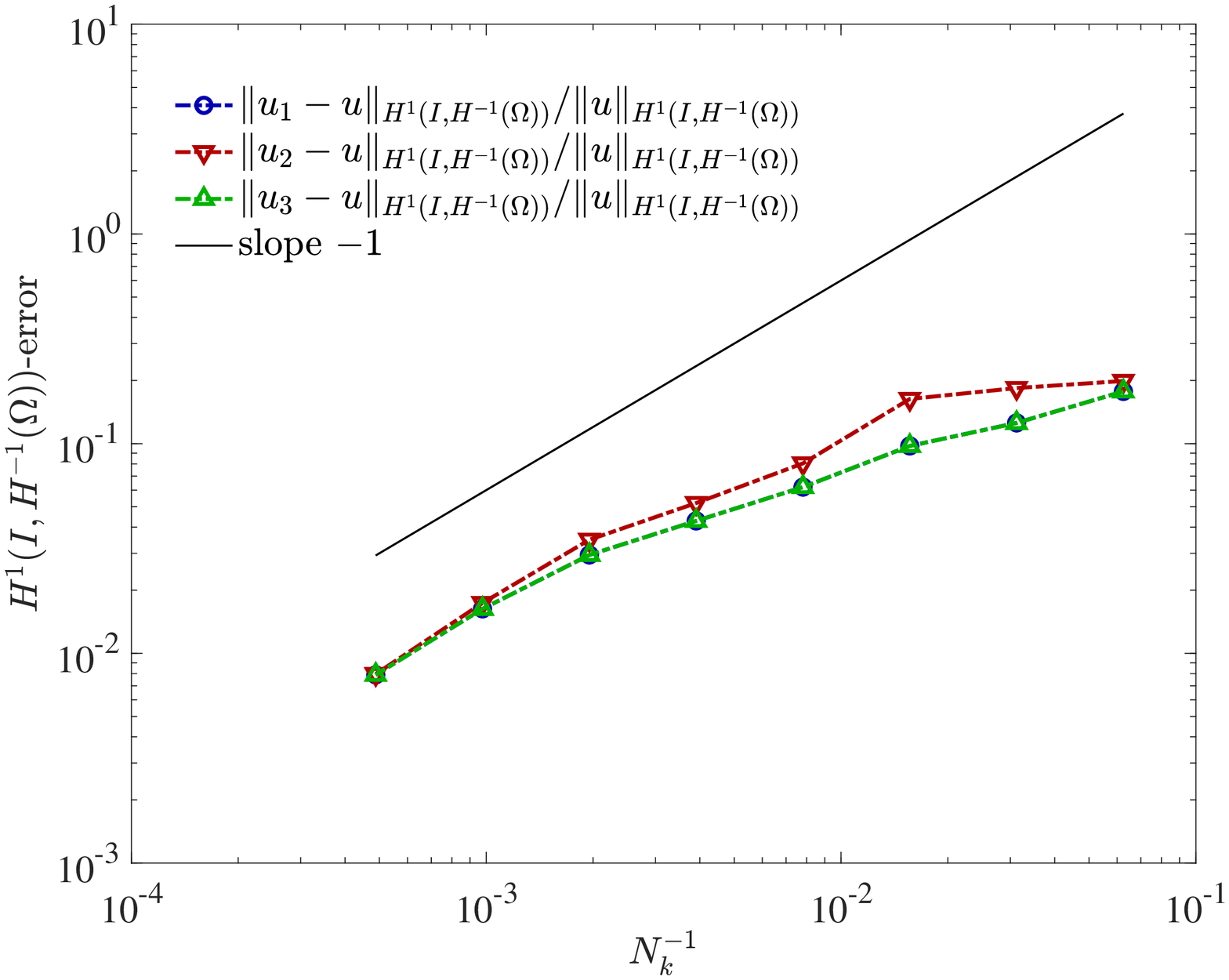}
\caption{Test case 1: convergence study for Methods 1, 2 and 3 for errors measured in the $L^2(I;H^1(\Omega))$-norm (top row) and in the $H^1(I;H^{-1}(\Omega))$-norm (bottom row) for various mesh-sizes $N_h^{-1/2}$ (left column) and various time-steps $N_k^{-1}$ (right column); in all cases, the curves for Methods~1 and~3 overlap.}
\label{fig:heat_conv}
\end{center}
\end{figure}

Figure~\ref{fig:heat_conv} presents a convergence study for Methods 1, 2 and 3 as a function of the discretization parameters $N_h$ (in space) and $N_k$ (in time). In both cases, we report the $L^2(I;H^1(\Omega))$- and $H^1(I;H^{-1}(\Omega))$-errors. The left panel considers $N_h=(2^l)^2$, $l\in\{2,3,4\}$ with $N_k=2^{13}$, whereas the right panel considers $N_k=2^l$, $l\in\{4,\ldots,11\}$ with $N_h=(2^6)^2$. We observe that the convergence is of second order in time (if the time-step is small enough) and first order in space in the $L^2(I;H^1(\Omega))$-norm, whereas it is of first order in time (if the time-step is small enough) and second order in space in the $H^1(I;H^{-1}(\Omega))$-norm. These convergence orders are consistent with the expected decay rates of the best-approximation errors in both norms when approximating smooth functions by elements of the discrete trial space $X_{hk}$. Incidentally, we observe that the errors produced by Method 2 in both norms are slightly worse for the coarser time discretizations; this observation is consistent with the CFL-dependent inf-sup stability estimate for Method 2.


\subsection{Test case 2: time-dependent diffusion}

We consider a time-dependent, selfadjoint differential operator $A(t)=-\mu(t)\Delta$ with diffusion coefficient $\mu(t)=\sin(100\pi t)+2$. The initial condition is $u_0=0$ and the source term is $f=1$. The explicit expression of the exact solution is not available. The discretization parameters are $N_h=(2^6)^2$ and $N_k=2^{13}$ (as in the previous test case), and the stopping tolerances are $\epsilon_{\rm greedy}=10^{-5}$ and $\epsilon_{\rm alt}=5\times 10^{-2}$ (as in the previous test case).

\begin{figure}[ht]
\begin{center}
\includegraphics[scale=0.2]{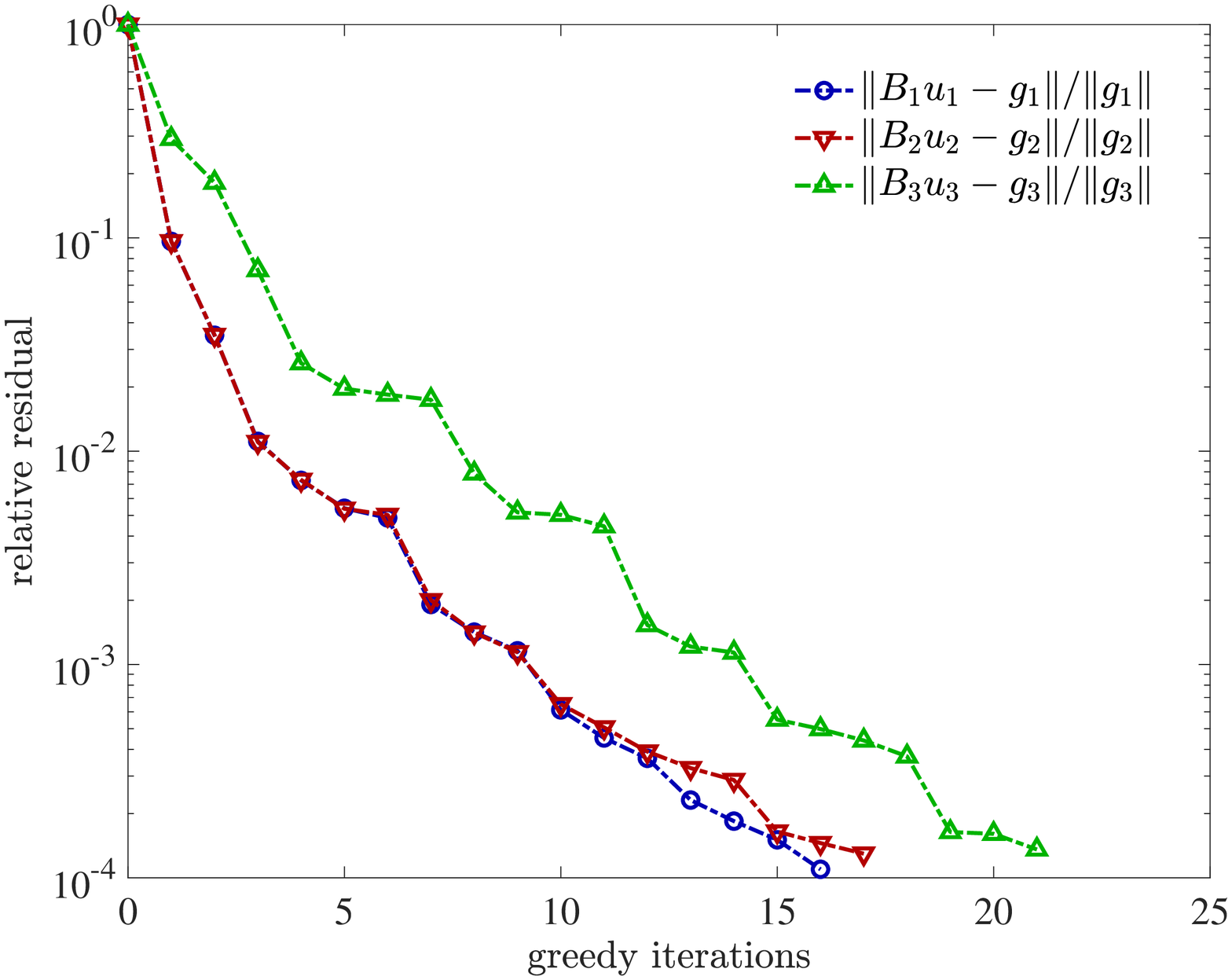} \qquad
\includegraphics[scale=0.2]{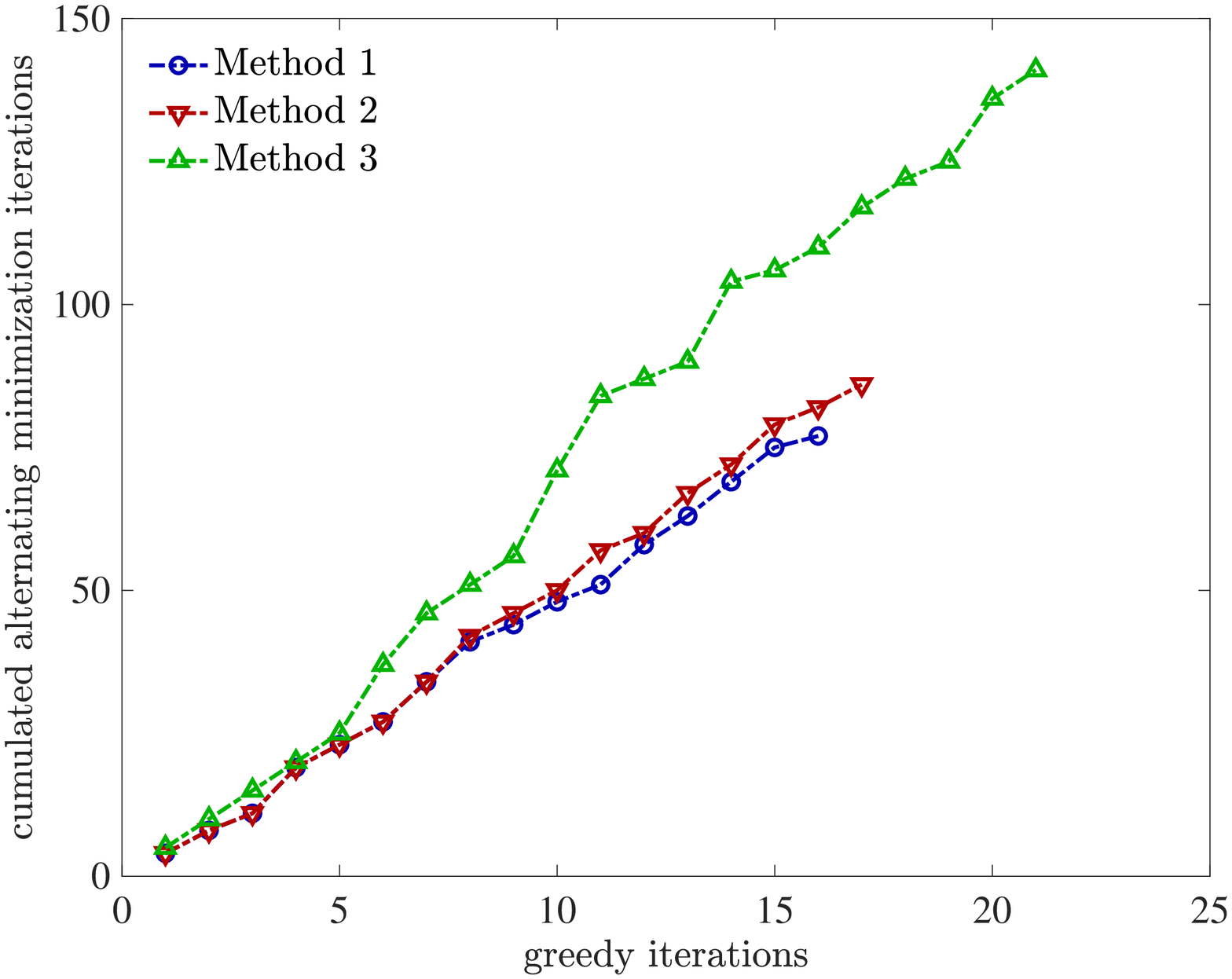}
\caption{Test case 2. Left: relative residual at each iteration of the greedy algorithm. Right: cumulated number of alternating minimization iterations in the greedy algorithm.}
\label{fig:time_reserr}
\end{center}
\end{figure}

The left panel of Figure~\ref{fig:time_reserr} presents the decrease of the relative residual as a function of the number of greedy iterations for Methods 1, 2 and 3. We notice that the greedy algorithm takes between 16 and 21 iterations for the three methods to converge. The right panel of Figure~\ref{fig:time_reserr} presents the cumulated number of alternating minimization iterations in the greedy algorithm for Methods 1, 2 and 3. We observe that this number is about the same for Methods 1 and 2 (as for test case 1), whereas it is about 1.5 times larger for Method 3, confirming once again the benefit of using a preconditioner. It is interesting to notice that with Methods 1 and 2, we have solved at convergence of the greedy algorithm about 80 linear systems in space, which is about $0.5\%$ of the amount that would have been solved by using an implicit time-stepping method (recall that $N_k=2^{13}$).
Furthermore, similar observations as for test cases 1 and 2 can be made concerning the two contributions to the relative residual and the behavior of the residuals $r_i^m$ defined by~\eqref{eq:def_rim}. In particular, we have again 
$r_1^m\le \min(r_2^m,r_3^m)$ for all $m\ge0$ (as expected from the MinRes formulation); as the greedy algorithm approaches convergence, $r_1^m$ reaches a value of $9\times 10^{-5}$, whereas $r_2^m$ and $r_3^m$ reach a value of $10^{-4}$ and $2\times 10^{-4}$, respectively.

\begin{figure}[ht]
\begin{center}
\includegraphics[scale=0.2]{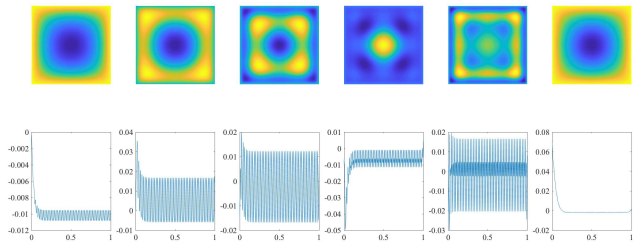} 
\caption{Test case 2: first six modes in space (top row) and in time (bottom row) for Method 1.}
\label{fig:time_modes}
\end{center}
\end{figure}

Figure~\ref{fig:time_modes} presents the first six space and time modes for Method 1. The first six modes obtained with Method 2 are essentially the same, whereas some differences, especially in the space modes, can be observed with Method 3. This observation again confirms that the preconditioner plays a relevant role in the exploration of the discrete trial space.  

\begin{figure}[ht]
\begin{center}
\includegraphics[scale=0.2]{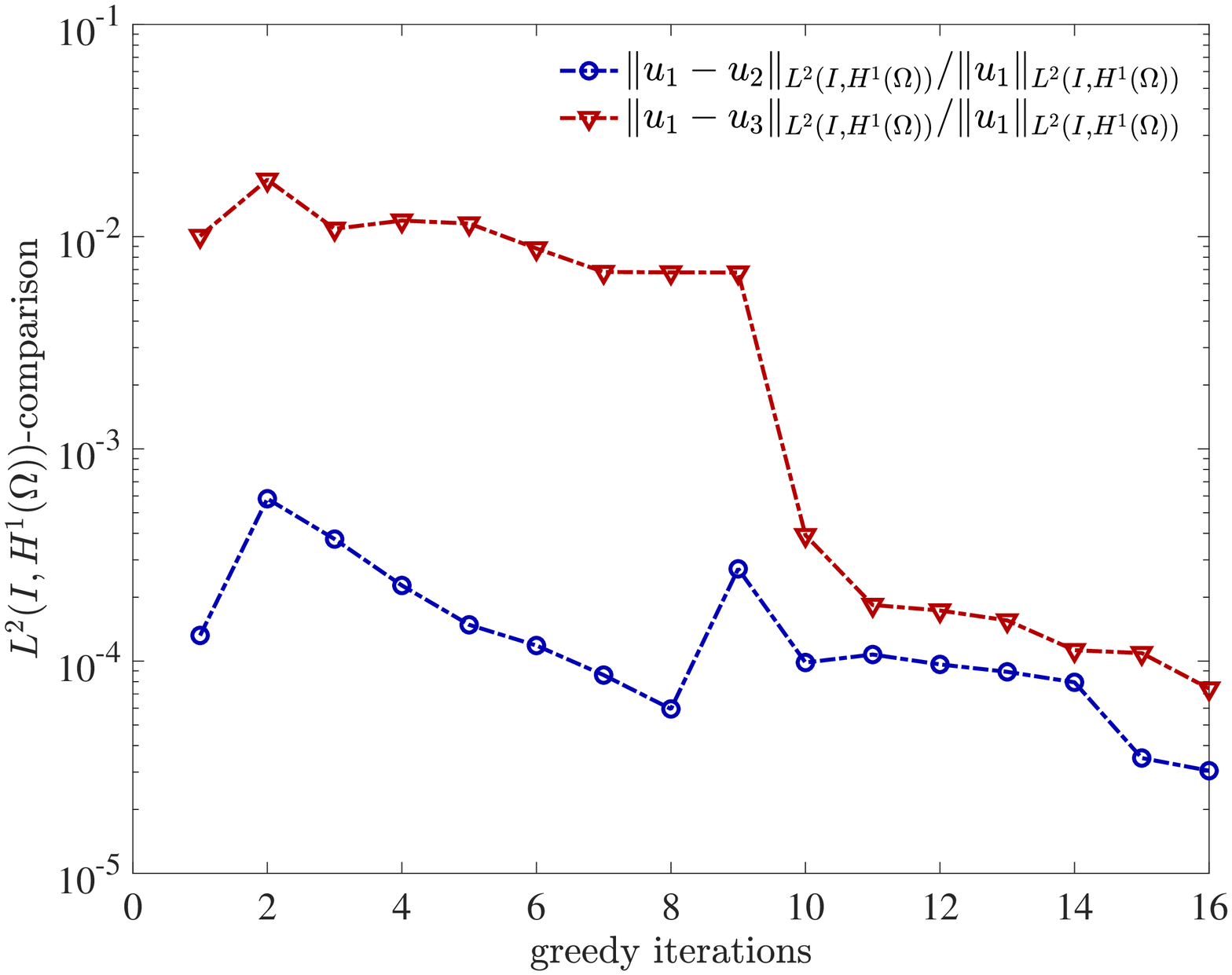} \qquad
\includegraphics[scale=0.2]{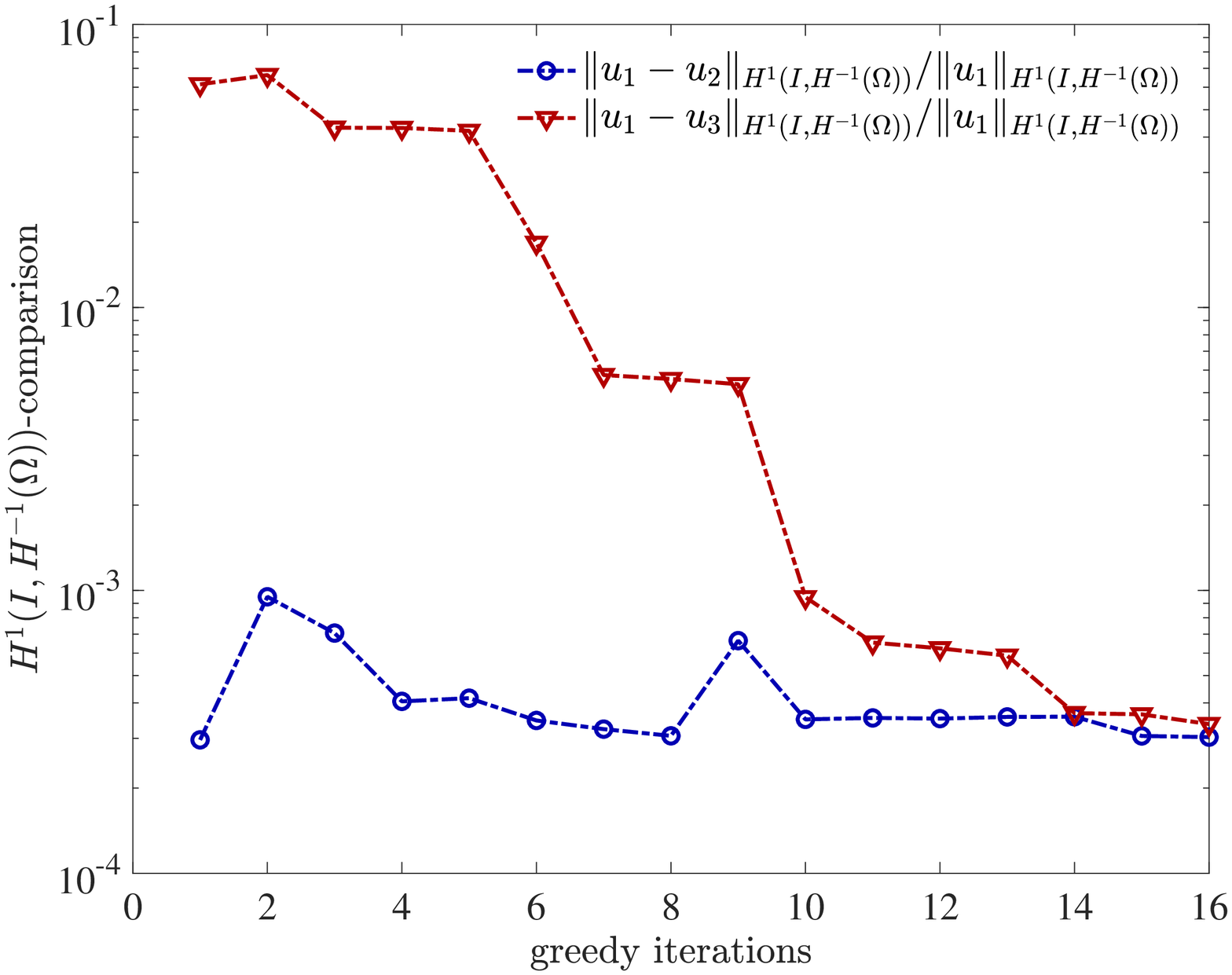} 
\caption{Test case 2: comparison of the solutions produced by Methods 1, 2, 3 in two norms: $L^2(I;H^1(\Omega))$ (left) and $H^1(I;H^{-1}(\Omega))$ (right).}
\label{fig:time_err}
\end{center}
\end{figure}

Figure~\ref{fig:time_err} reports the normalized differences $(u_{hk,1}^m-u_{hk,2}^m)$ and $(u_{hk,1}^m-u_{hk,3}^m)$ as a function of the iteration counter $m$ of the greedy algorithm, where, as above, the additional subscript $i\in\{1,2,3\}$ indicates which method has been used. These differences are measured in the $L^2(I;H^1(\Omega))$- and $H^1(I;H^{-1}(\Omega))$-norms. We observe that the three methods produce approximate solutions that are relatively close in both norms. At the convergence of the greedy algorithm, the difference in the $L^2(I;H^1(\Omega))$-norm is of the order of $5\times 10^{-5}$, and it is about one order of magnitude higher in the $H^1(I;H^{-1}(\Omega))$-norm.

\begin{figure}[ht]
\begin{center}
\includegraphics[scale=0.2]{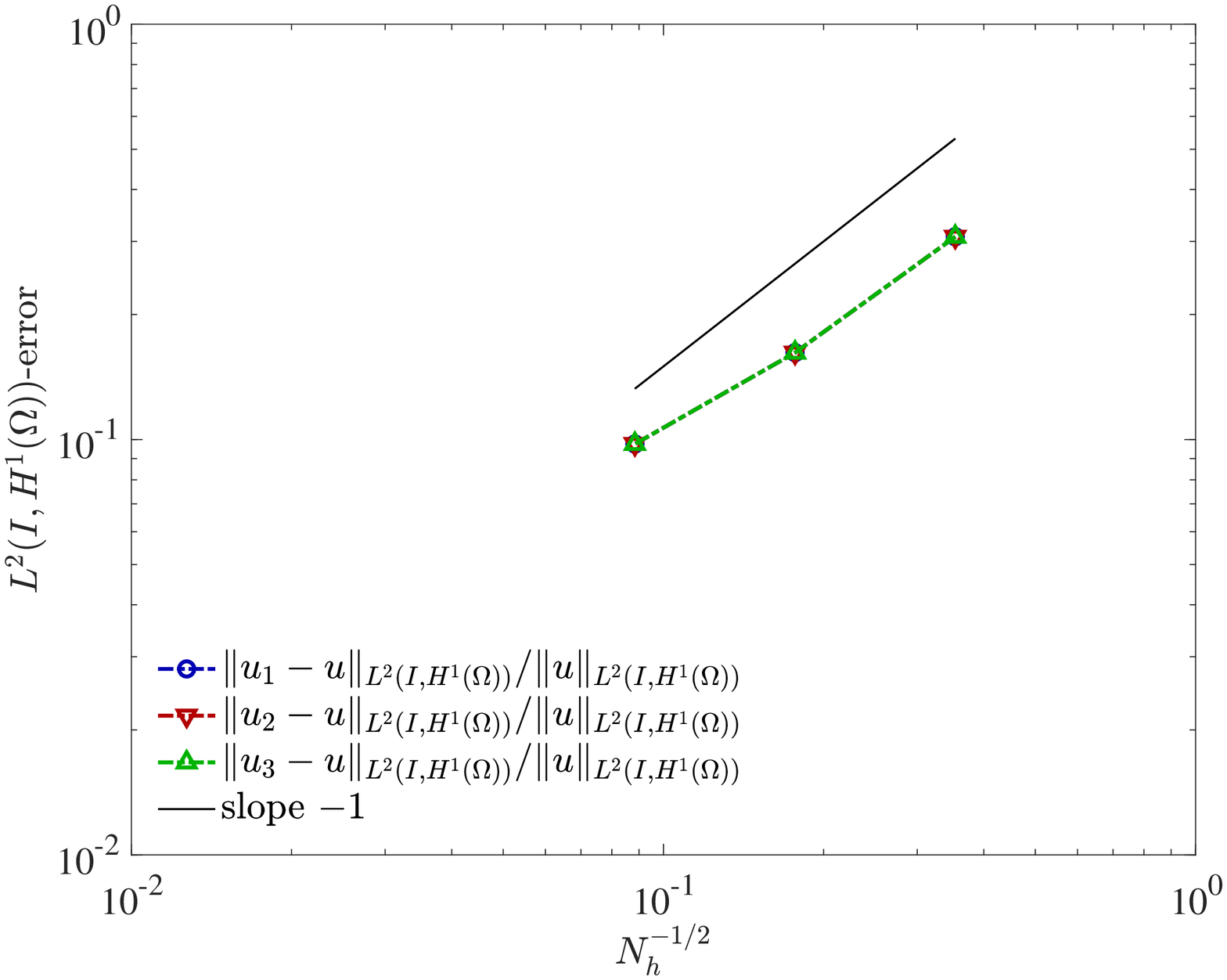} \qquad
\includegraphics[scale=0.2]{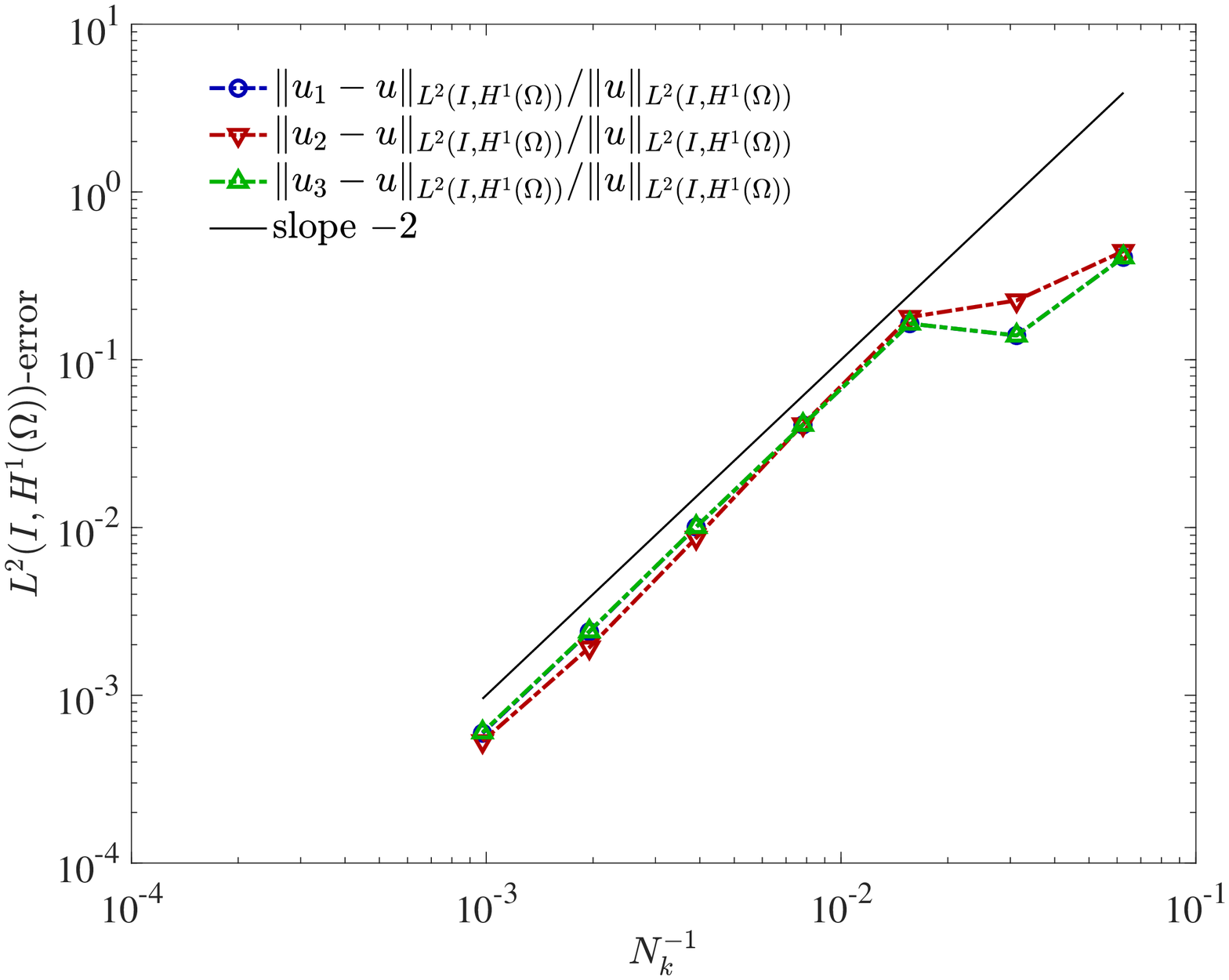} \\
\includegraphics[scale=0.2]{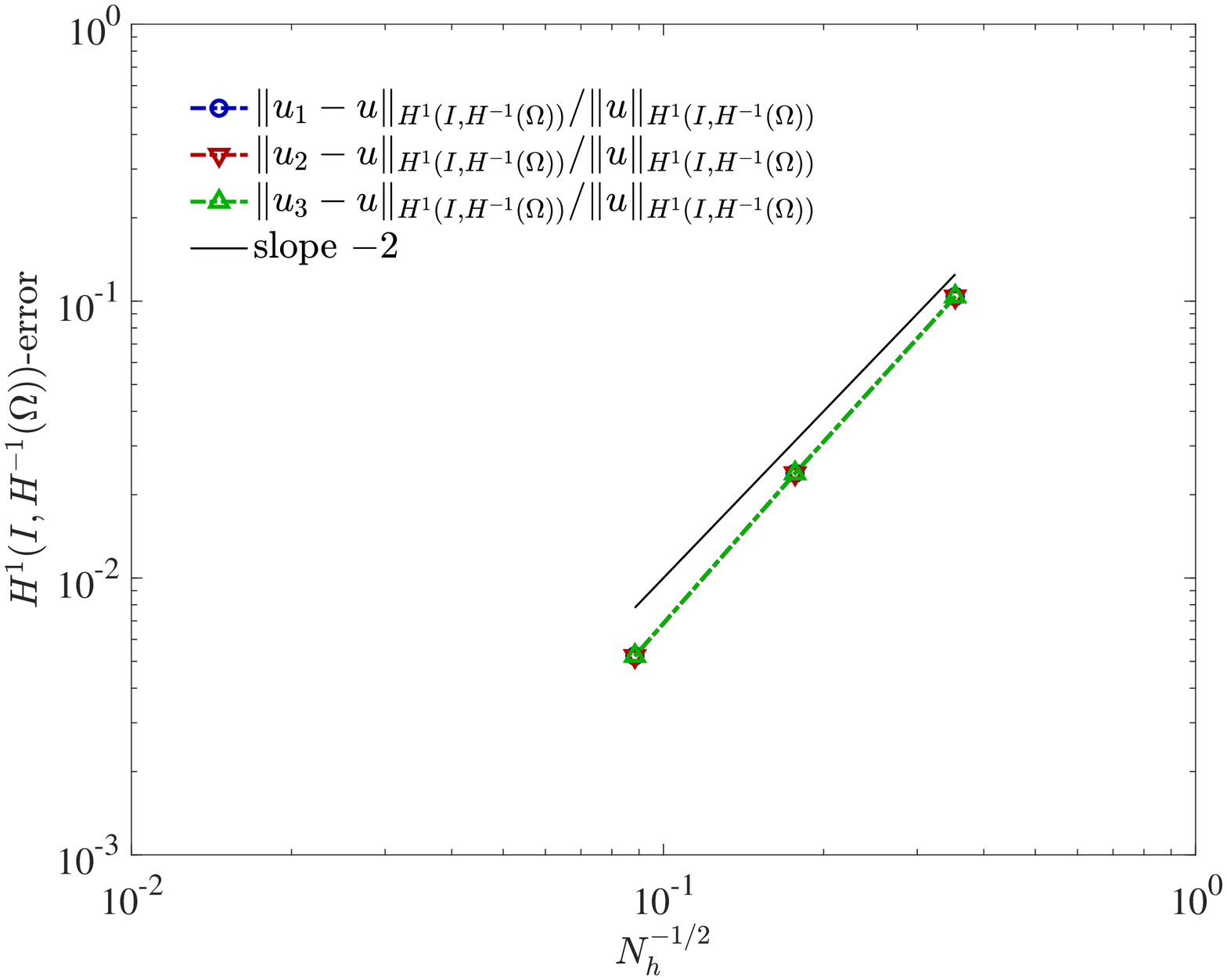} \qquad
\includegraphics[scale=0.2]{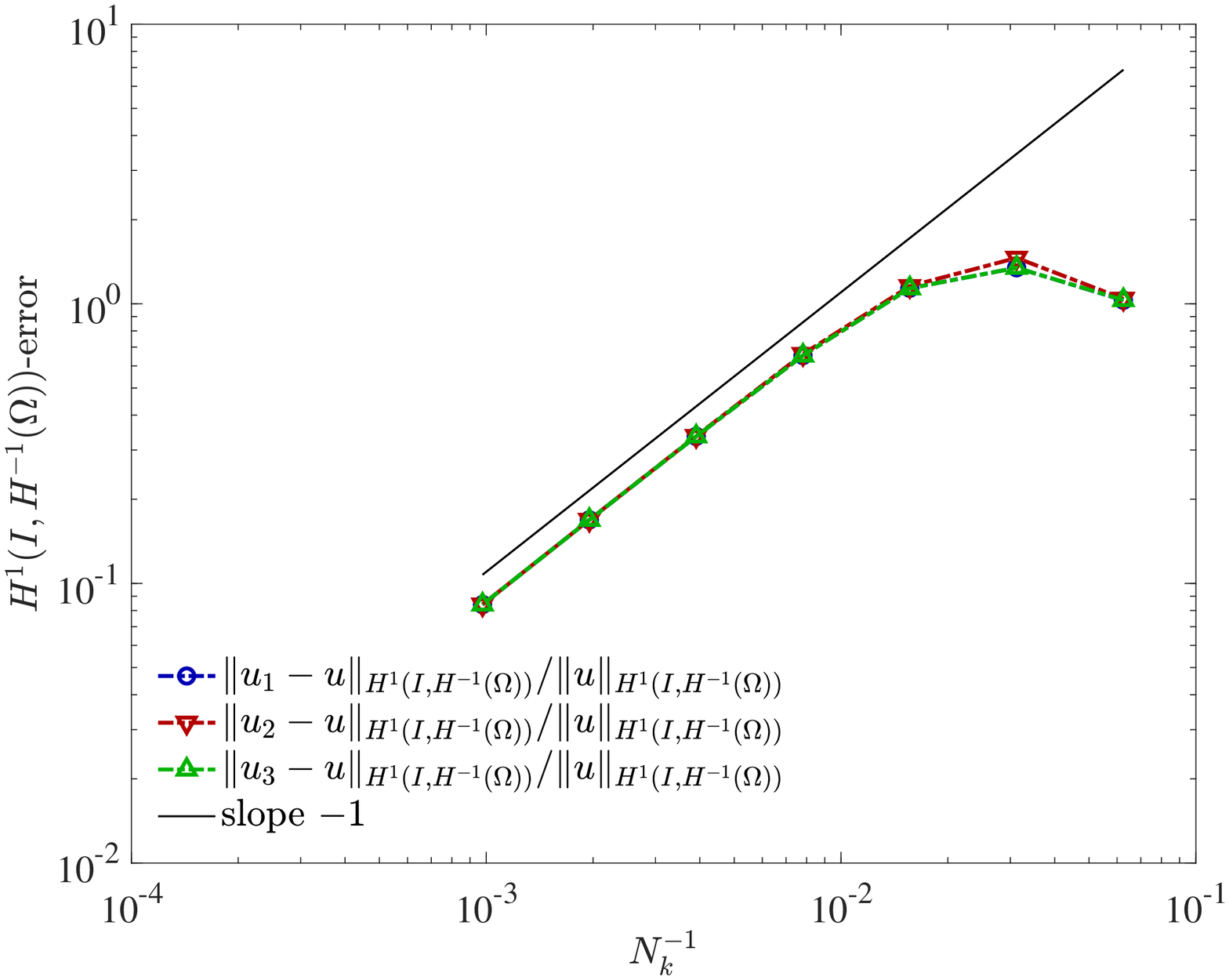}
\caption{Test case 2: convergence study for Methods 1, 2 and 3 for errors measured in the $L^2(I;H^1(\Omega))$-norm (top row) and in the $H^1(I;H^{-1}(\Omega))$-norm (bottom row) for various mesh-sizes $N_h^{-1/2}$ (left column) and various time-steps $N_k^{-1}$ (right column); in all cases, the curves for Methods~1 and~3 overlap.}
\label{fig:time_conv}
\end{center}
\end{figure}

Figure~\ref{fig:time_conv} presents a convergence study for Methods 1, 2 and 3 as a function of the discretization parameters $N_h$ (in space) and $N_k$ (in time). In both cases, we report the $L^2(I;H^1(\Omega))$- and $H^1(I;H^{-1}(\Omega))$-errors. The left panel considers $N_h=(2^l)^2$, $l\in\{2,3,4\}$ with $N_k=2^{13}$, whereas the right panel considers $N_k=2^l$, $l\in\{4,\ldots,10\}$ with $N_h=(2^6)^2$. Since the exact solution is not available, we consider for each method the approximate solution produced on the finest space-time discretization available. These method-dependent reference solutions are very close according to Figure~\ref{fig:time_err}, and their differences are, in both norms, two orders of magnitude lower than the convergence errors reported in Figure~\ref{fig:time_conv}. In this figure, we observe that for the three methods, the convergence rates are consistent with the best-approximation properties of the discrete trial space $X_{hk}$ in both norms: the convergence is of second order in time and first order in space in the $L^2(I;H^1(\Omega))$-norm, whereas it is of first order in time and second order in space in the $H^1(I;H^{-1}(\Omega))$-norm.

\subsection{Test case 3: advection-diffusion}

We consider in this section a time-independent, but non-selfadjoint, differential operator $A=-\nabla\cdot \mu\nabla +c(x,y)\cdot \nabla$ with diffusion coefficient $\mu=0.1$ and advection velocity field $c(x,y)=2\pi(\frac12-y,x-\frac12)^{\mathrm{T}}$. The source term is $f=0$
and the initial condition is $u_0(x,y)=\textup{exp}\Big(-\frac{(x-\frac23)^2+(y-\frac12)^2}{0.07^2}\Big)$. The explicit expression of the exact solution is not available. The discretization parameters are $N_h=(2^5)^2$ and $N_k=2^{10}$, and the stopping tolerances are $\epsilon_{\rm greedy}=10^{-5}$ and $\epsilon_{\rm alt}=5\times 10^{-2}$ (as in the previous test cases). The discretization parameters for this test case are a bit coarser than for the two other test cases because this test case turns out to be more computationally intensive. This is due to the fact that the differential operator in space is non-selfadjoint so that it is necessary to assemble the matrix $\bold{A}_h^{\rm T}\bold{D}_h^{-1}$ appearing in the definition~\eqref{eq:def_mathbb_B} of the global system matrix $\matB$ (recall that $P=1$ here and that $\bold{A}_h^{\mathrm{T}}=\bold{D}_h$ when the differential operator corresponds to a pure diffusion operator). 

\begin{figure}[ht]
\begin{center}
\includegraphics[scale=0.2]{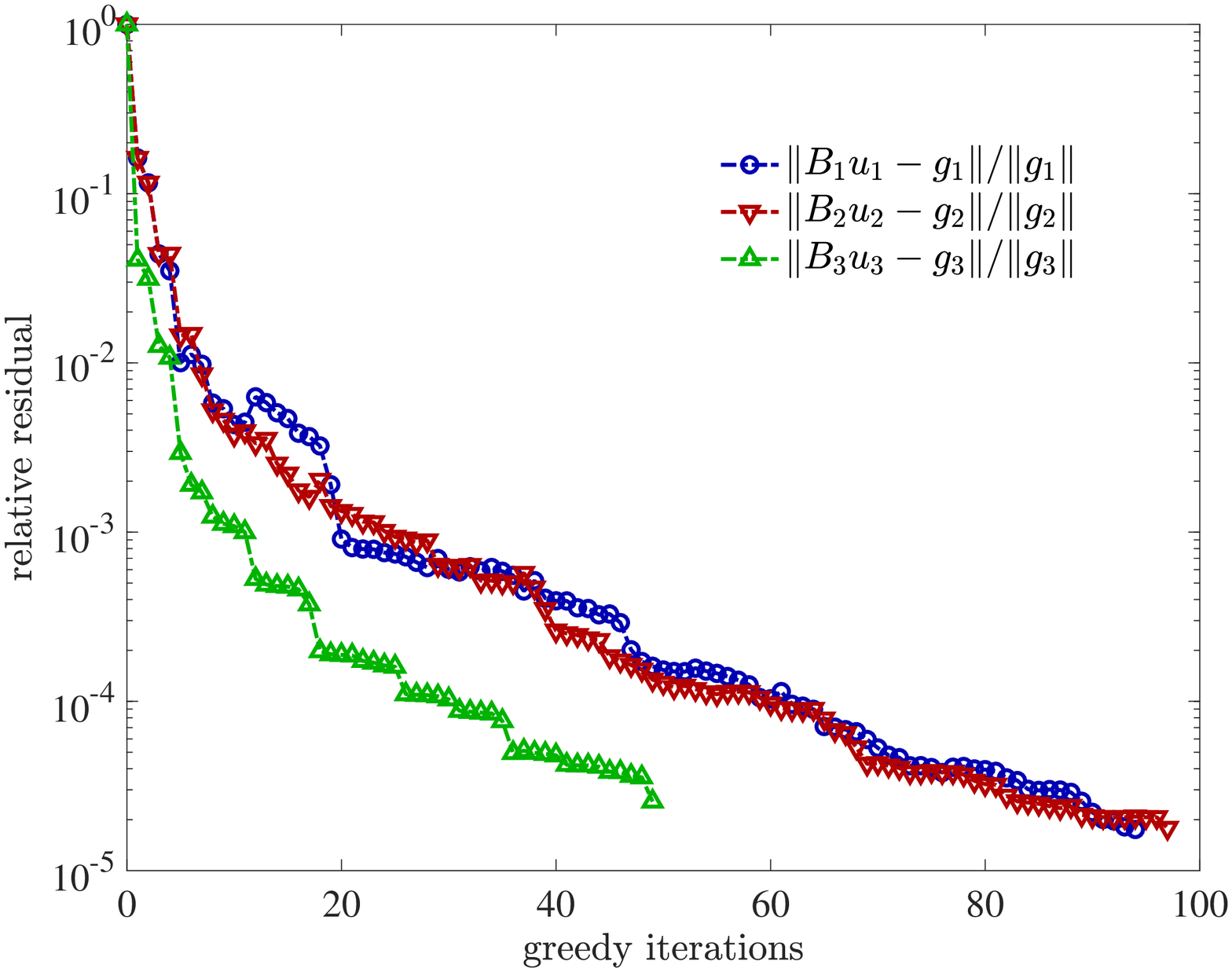} \qquad
\includegraphics[scale=0.2]{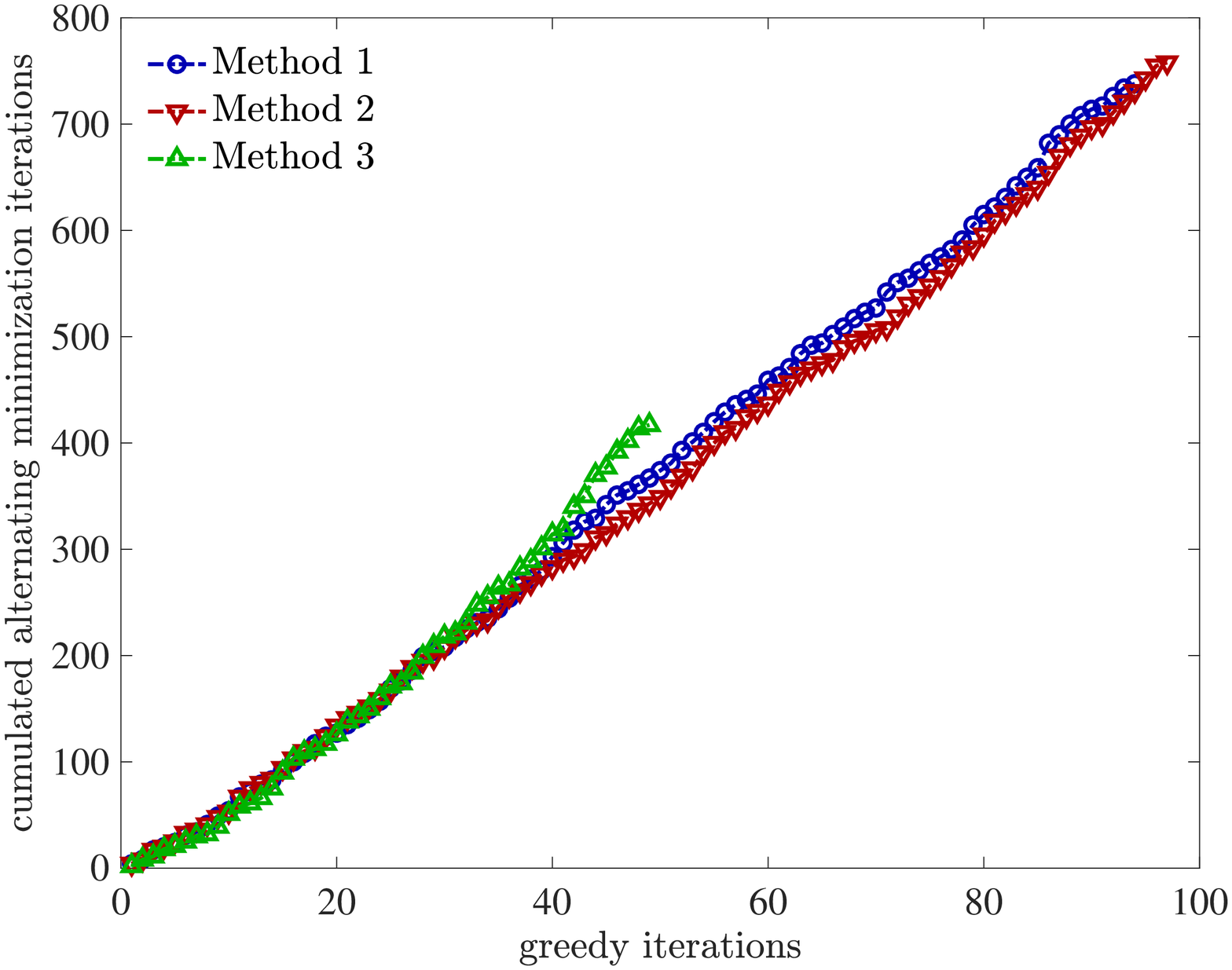}
\caption{Test case 3. Left: relative residual at each iteration of the greedy algorithm. Right: cumulated number of alternating minimization iterations in the greedy algorithm.}
\label{fig:ad_reserr}
\end{center}
\end{figure}

The left panel of Figure~\ref{fig:ad_reserr} presents the decrease of the relative residual as a function of the number of greedy iterations for Methods 1, 2 and 3. We notice that for Methods 1 and 2, the greedy algorithm takes around 90 iterations to converge (94 and 97, respectively), whereas it takes only 49 iterations for Method 3. Thus, for this test case, Method 3 takes less iterations. The right panel of Figure~\ref{fig:ad_reserr} presents the cumulated number of alternating minimization iterations in the greedy algorithm for Methods 1, 2 and 3. We observe that this number is about the same for the three methods. When reaching convergence for the greedy algorithm, we have solved about 750 linear systems in space, which is 73\% of the amount that would have been solved by using an implicit time-stepping method (recall that $N_k=2^{10}$). This percentage is larger than the ones reported for the previous two test cases, but is still competitive. 
Furthermore, similar observations as for test cases 1 and 2 can be made concerning the two contributions to the relative residual and the behavior of the residuals $r_i^m$ defined by~\eqref{eq:def_rim}. In particular, we have again 
$r_1^m\le \min(r_2^m,r_3^m)$ for all $m\ge0$ (as expected from the MinRes formulation); as the greedy algorithm approaches convergence, $r_1^m$ reaches a value of $2\times 10^{-5}$, whereas $r_2^m$ and $r_3^m$ reach a value of $7\times10^{-5}$ and $3\times 10^{-3}$, respectively.

\begin{figure}[ht]
\begin{center}
\includegraphics[scale=0.2]{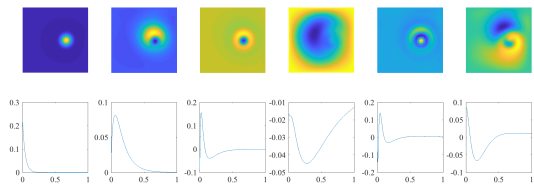}
\caption{Test case 3: first six modes in space (top row) and in time (bottom row) for Method 1.}
\label{fig:ad_modes}
\end{center}
\end{figure}

Figure~\ref{fig:ad_modes} presents the first six space and time modes for Method 1. The first six modes obtained with Method 2 are essentially the same, whereas some differences, especially in the space modes, can be observed with Method 3. This observation again confirms that the preconditioner plays a relevant role in the exploration of the discrete trial space.  

\begin{figure}[ht]
\begin{center}
\includegraphics[scale=0.2]{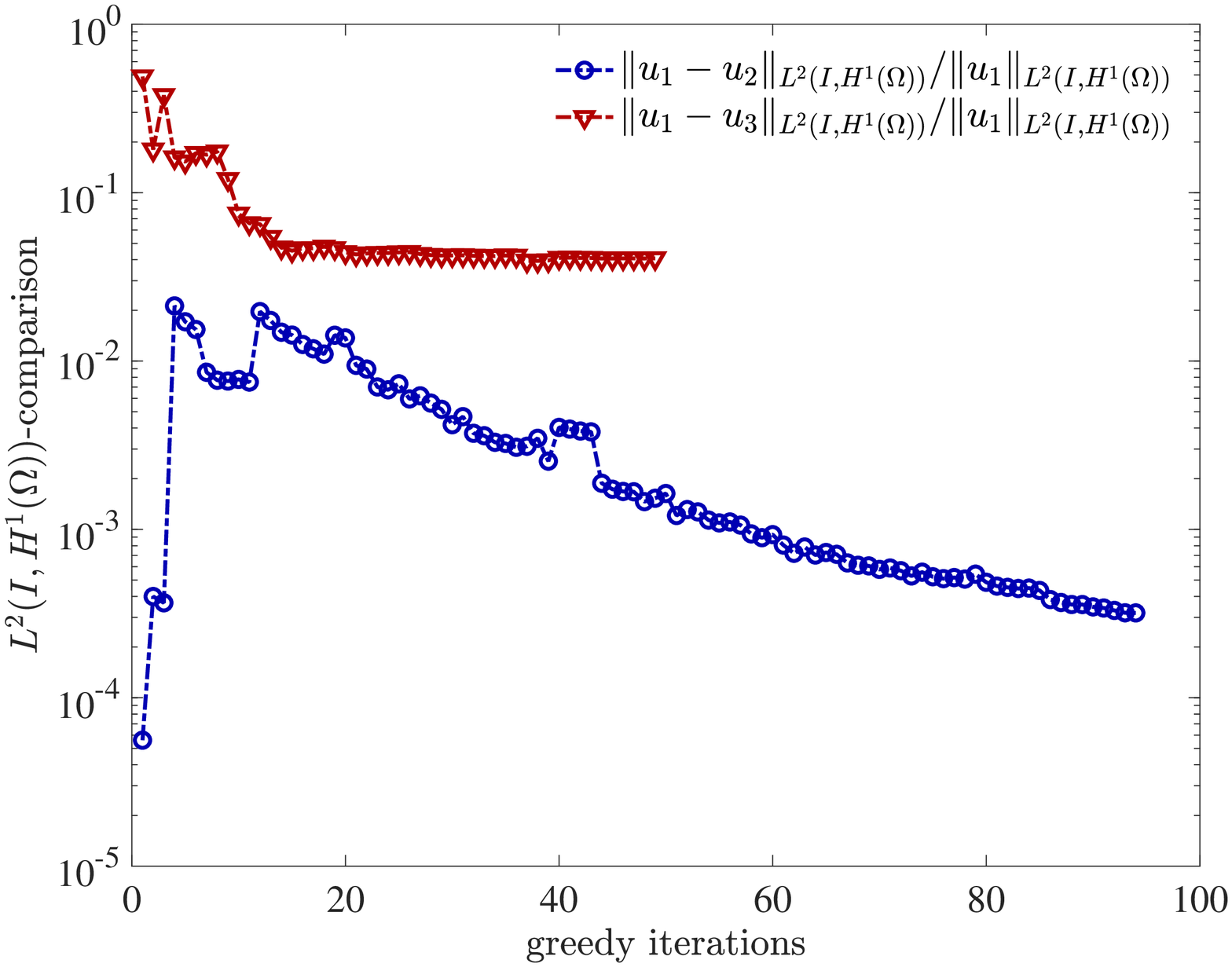} \qquad
\includegraphics[scale=0.2]{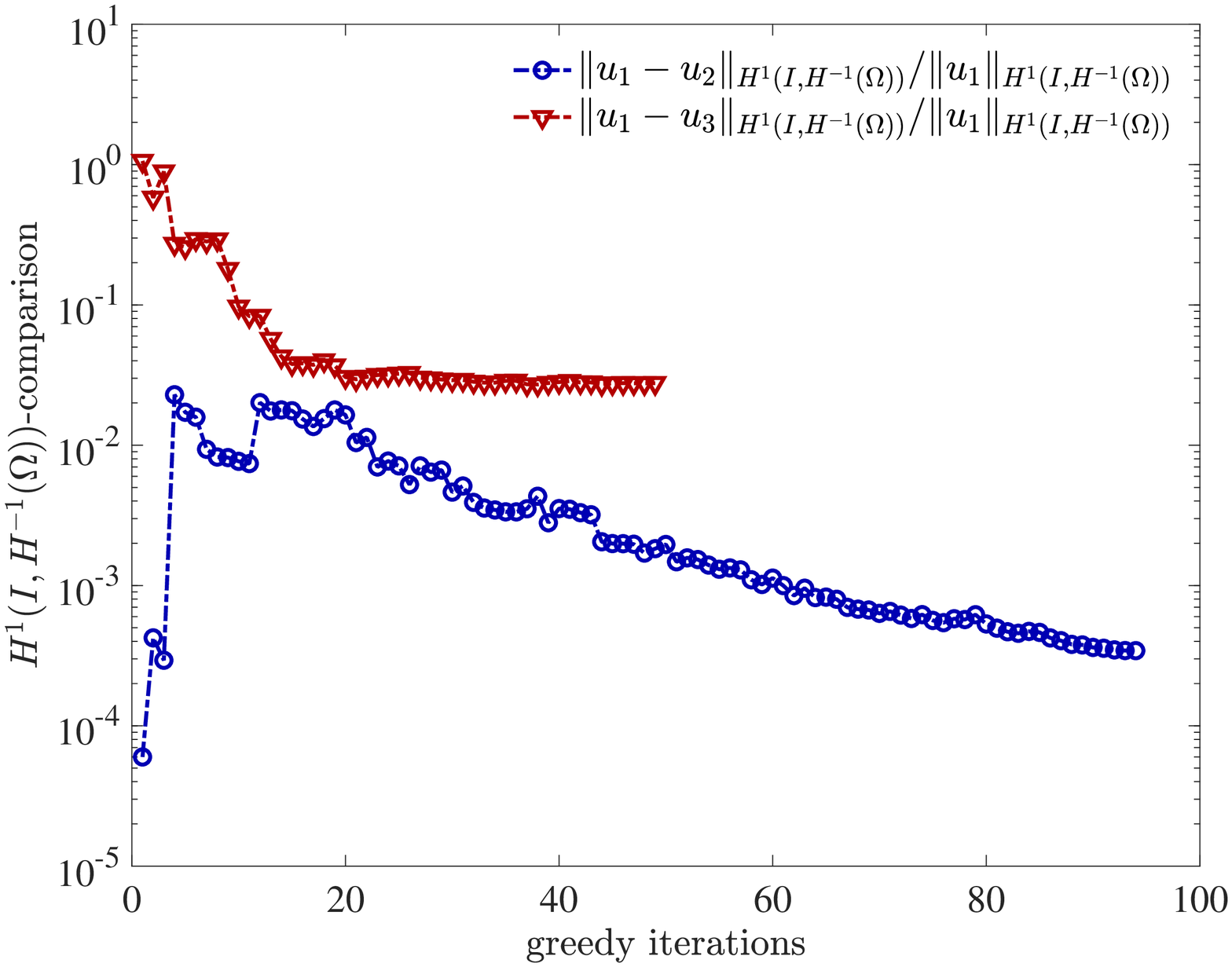} 
\caption{Test case 3: comparison of the solutions produced by Methods 1, 2, 3 in two norms: $L^2(I;H^1(\Omega))$ (left) and $H^1(I;H^{-1}(\Omega))$ (right).}
\label{fig:ad_err}
\end{center}
\end{figure}

Figure~\ref{fig:ad_err} reports the normalized differences $(u_{hk,1}^m-u_{hk,2}^m)$ and $(u_{hk,1}^m-u_{hk,3}^m)$ as a function of the iteration counter $m$ of the greedy algorithm where, as above, the additional subscript $i\in\{1,2,3\}$ indicates which method has been used. These differences are measured in the $L^2(I;H^1(\Omega))$- and $H^1(I;H^{-1}(\Omega))$-norms. We observe that the difference between the solutions produced by Methods 1 and 3 is significant in both norms (two orders of magnitude larger than the difference between Methods 1 and 2); therefore, we can conclude that Method 3 converges more rapidly than Methods 1 and 2, but with a poorer accuracy.

\begin{figure}[ht]
\begin{center}
\includegraphics[scale=0.2]{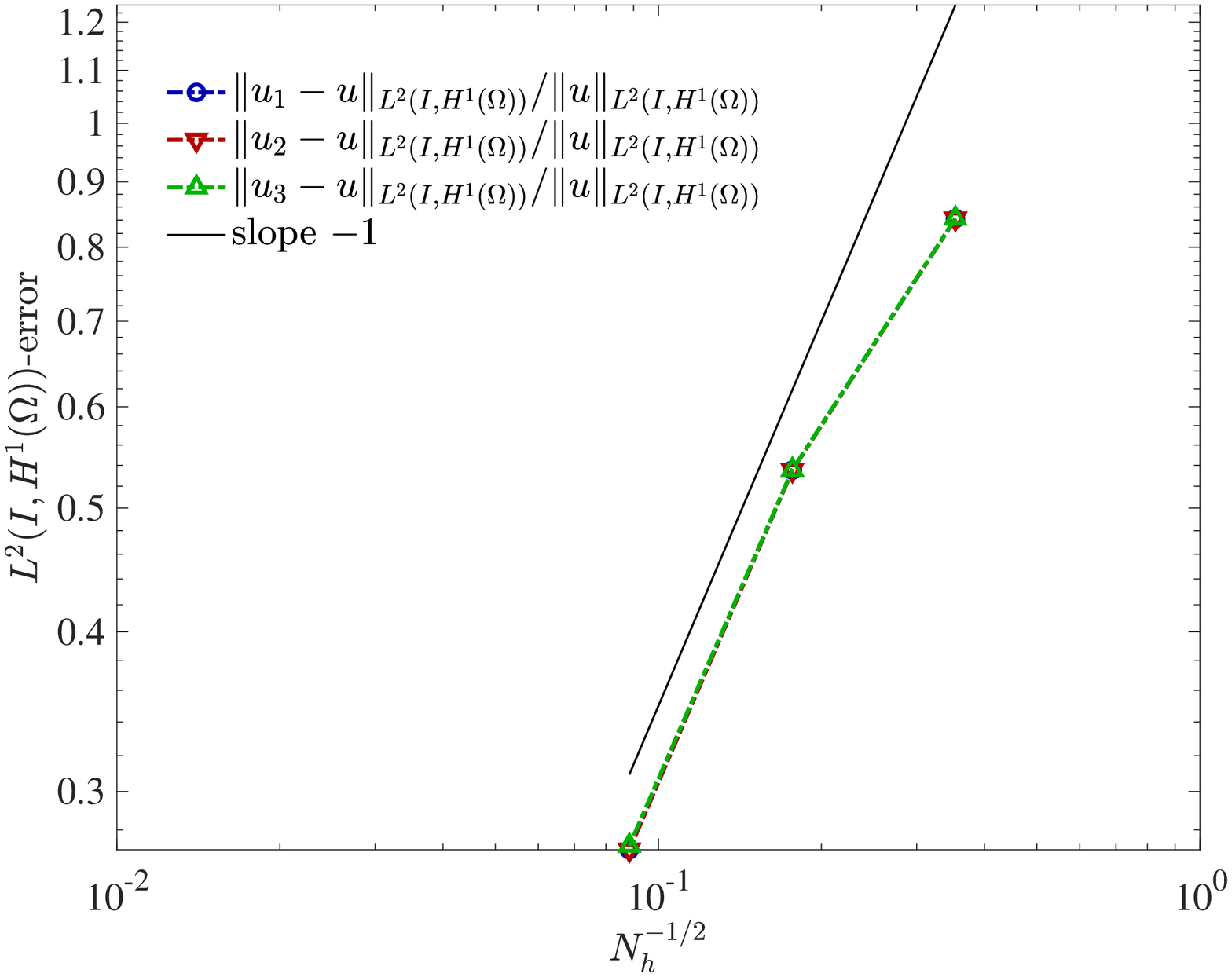} \qquad
\includegraphics[scale=0.2]{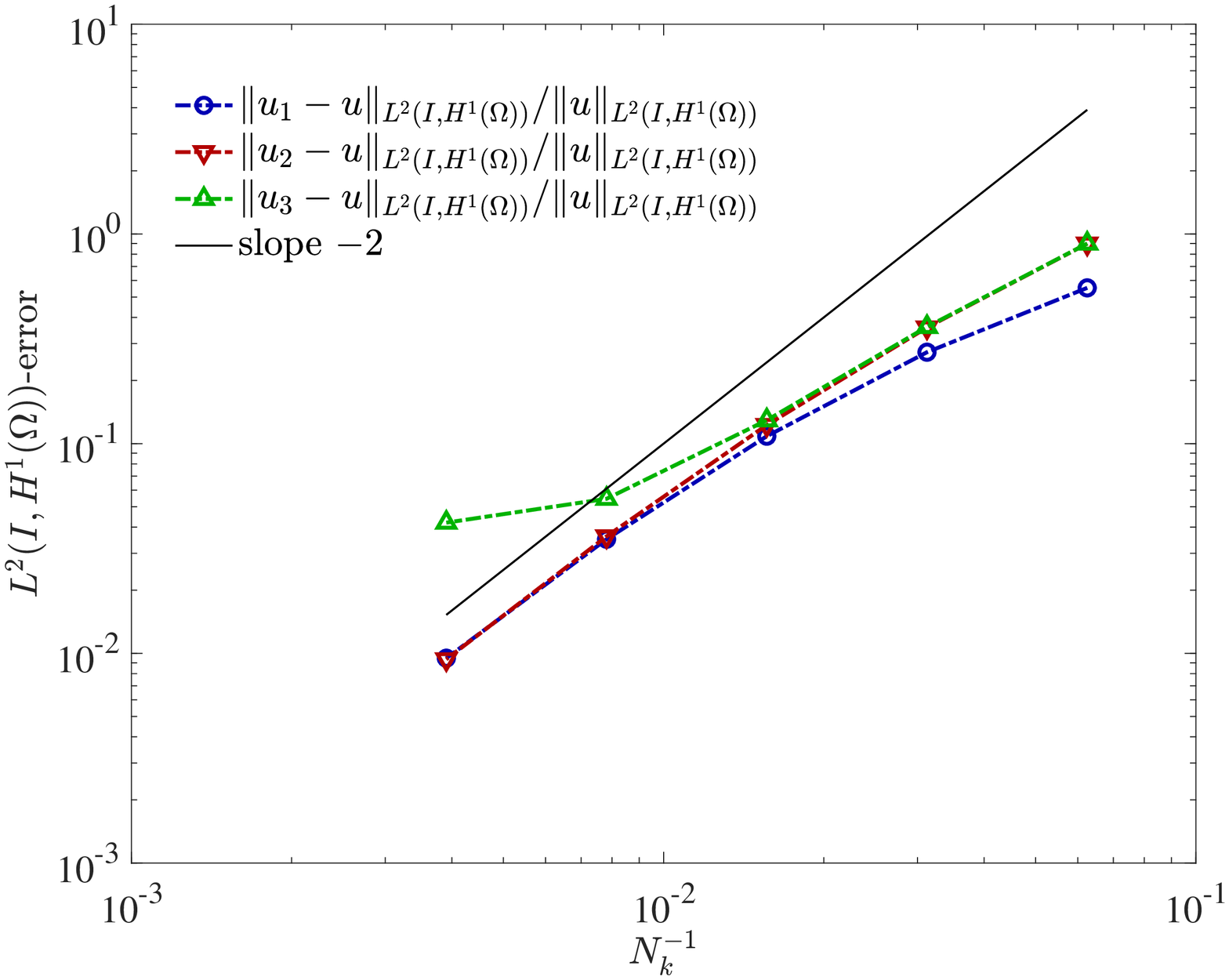} \\
\includegraphics[scale=0.2]{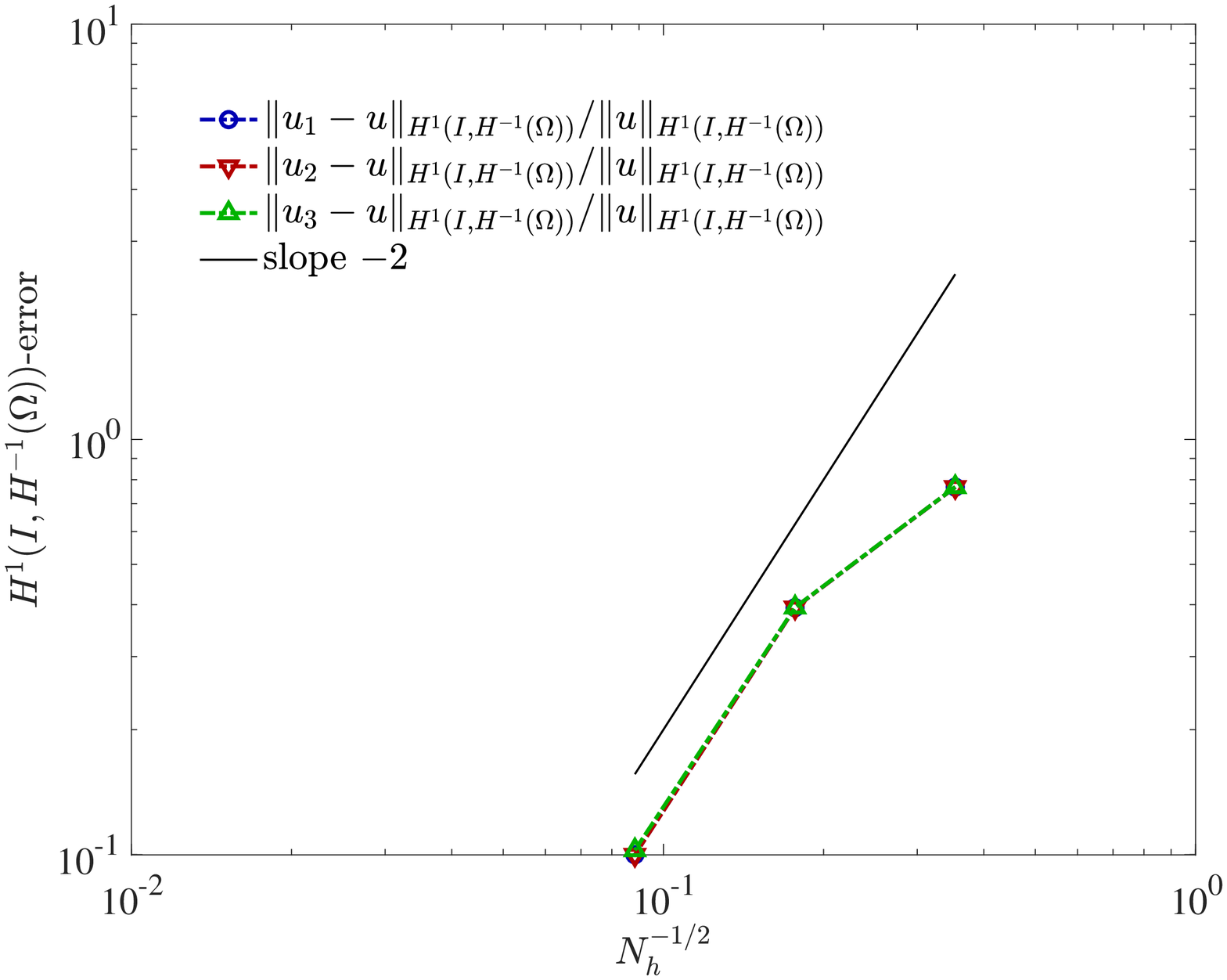} \qquad
\includegraphics[scale=0.2]{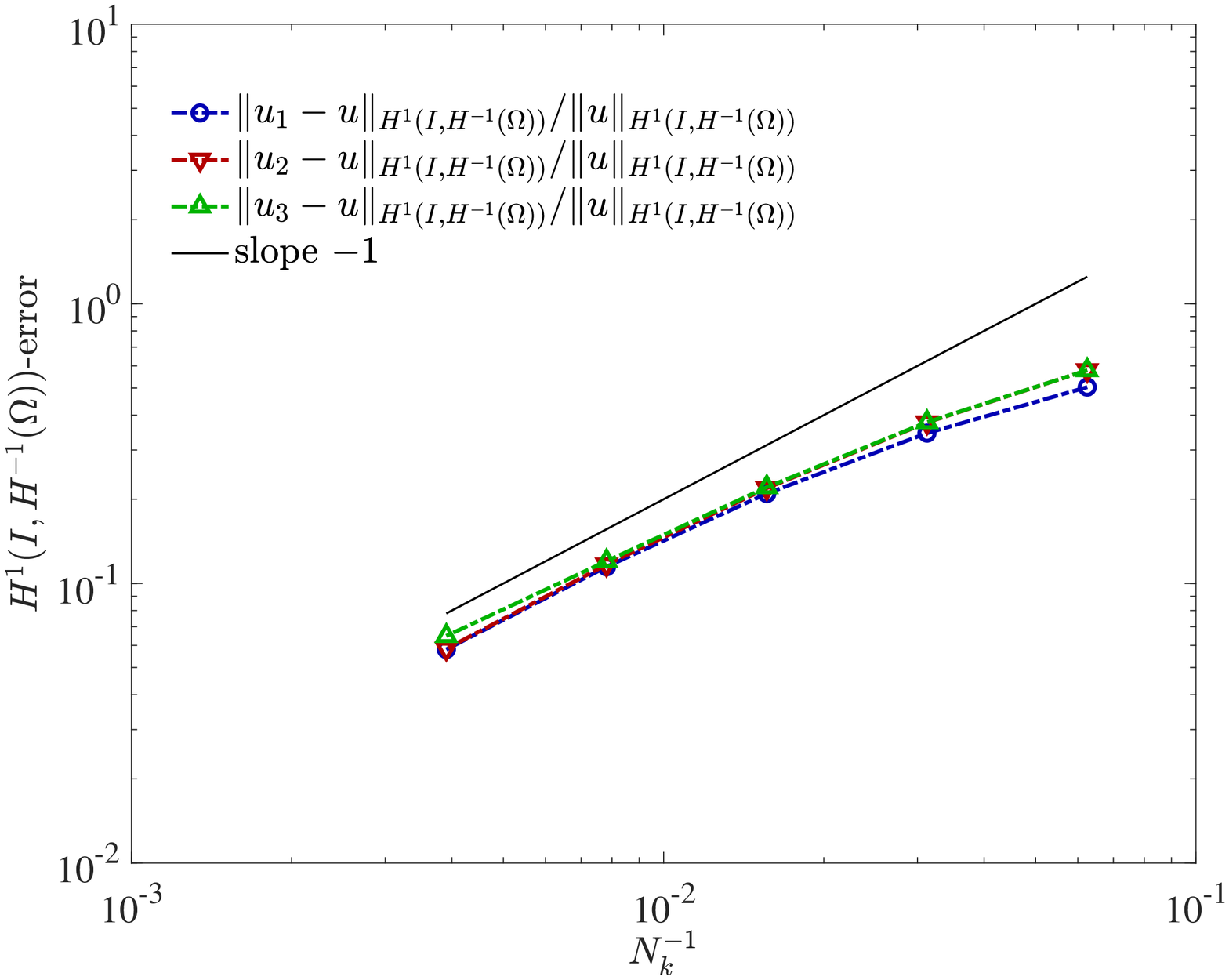}
\caption{Test case 3: convergence study for Methods 1, 2 and 3 for errors measured in the $L^2(I;H^1(\Omega))$-norm (top row) and in the $H^1(I;H^{-1}(\Omega))$-norm (bottom row) for various mesh-sizes $N_h^{-1/2}$ (left column) and various time-steps $N_k^{-1}$ (right column); in the right column, the curve for Method~2 overlaps with that of Method~1 for the smaller time-steps and with that of Method~3 for the larger time-steps.}
\label{fig:ad_conv}
\end{center}
\end{figure}

Figure~\ref{fig:ad_conv} presents a convergence study for Methods 1, 2 and 3 as a function of the discretization parameters $N_h$ (in space) and $N_k$ (in time). In both cases, we report the $L^2(I;H^1(\Omega))$- and $H^1(I;H^{-1}(\Omega))$-errors. The left panel considers $N_h=(2^{l})^2$, $l\in\{2,3,4\}$ with $N_k=2^{10}$, whereas the right panel considers $N_k=2^l$, $l\in\{4,\ldots,8\}$ with $N_h=(2^5)^2$. Since the exact solution is not available, we consider for each method the approximate solution produced on the finest space-time discretization available. For Methods 1 and 2, these two reference solutions are very close according to Figure~\ref{fig:ad_err}, and their difference is, in both norms, two orders of magnitude lower than the convergence errors reported in Figure~\ref{fig:ad_conv}. Moreover, for both methods, the reported convergence rates are, as above, consistent with the best-approximation properties of the discrete trial space $X_{hk}$ in both norms. Finally, for Method 3, the convergence rates are similar except for the behavior with respect to time refinement in the $L^2(I;H^1(\Omega))$-norm which is somewhat sub-optimal.

\section{Conclusion and outlook}
\label{sec:conclusion}

In this work, we have devised a space-time tensor for the low-rank approximation of linear parabolic evolution equations. The proposed method is uniformly stable with respect to the space-time discretization parameters and leads to solving sequentially global problems in space and in time. Our numerical results on various test cases indicate the importance of the preconditioner (which enters the method by means of the norm equipping the discrete test space), since the preconditioner has an influence on the convergence of the greedy algorithm. However, although the preconditioner improves the convergence of the greedy algorithm, it increases the cost of each iteration. Table~\ref{tab:cpu} summarizes the relative CPU times for Methods 2 and 3 normalized with respect to Method 1 (we have considered 21 random initializations in each case and used for each method the median CPU time). We can see from this table that Methods 1 and 2 essentially deliver the same CPU times, whereas Method 3 turns out to be more effective especially for test case 3 despite the increased number of greedy iterations.
Finally, various perspectives of this work can be envisaged. We mention in particular the question of adapting the discretization spaces and that of devising different approaches to obtain a separated representation of the exact solution with sufficient accuracy at low-rank when the differential operator has a dominant non-selfadjoint part, as in advection-dominated transport problems.

\begin{table}
\begin{center}\begin{tabular}{|l|ccc|}
\hline
Test case&1&2&3\\
\hline
Method 2&0.92&1.01&1.02\\
Method 3&1.17&0.82&0.38\\
\hline
\end{tabular}
\end{center}
\caption{Relative CPU times for Methods 2 and 3 with respect to Method 1 for the three test cases.}
\label{tab:cpu}
\end{table}

\bibliographystyle{acm}
\bibliography{Bibliography}
\end{document}